\definecolor{vegasgold}{rgb}{0.77, 0.7, 0.35}
\definecolor{darkgoldenrod}{rgb}{0.72, 0.53, 0.04}
\definecolor{gold(metallic)}{rgb}{0.83, 0.69, 0.22}
\tikzset{every loop/.style={min distance=10mm,looseness=10}}
\DeclareFontFamily{U}{wncy}{}
\DeclareFontShape{U}{wncy}{m}{n}{<->wncyr10}{}
\DeclareSymbolFont{mcy}{U}{wncy}{m}{n}
\DeclareMathSymbol{\Sh}{\mathord}{mcy}{"58}
\newtheorem{theorem}{Theorem}[section]
\newtheorem{lemma}[theorem]{Lemma}
\newtheorem{proposition}[theorem]{Proposition}
\newtheorem{corollary}[theorem]{Corollary}
\numberwithin{equation}{section}
\theoremstyle{remark}
\newtheorem{remark}[theorem]{Remark}
\newcommand{\op}[1]{\operatorname{#1}}
\newcommand{\bE}{\mathbf{E}}
\newcommand{\cF}{\mathcal{F}}
\newcommand{\Z}{\mathbb{Z}}
\begin{document}
\title[Distribution of eigenvalues in families of Cayley graphs]{On the distribution of eigenvalues in families of Cayley graphs}

\author[M.~Lalin]{Matilde Lalin\, \orcidlink{0000-0002-7155-5439}}
\address[Lalin]{Universit\'e de Montr\'eal, Pavillon Andr\'e-Aisenstadt, D\'ept. de mathématiques et de statistique,
CP 6128, succ. Centre-ville,
Montr\'eal, Qu\'ebec, H3C 3J7, Canada}
\email{matilde.lalin@umontreal.ca}

\author[A.~Ray]{Anwesh Ray\, \orcidlink{0000-0001-6946-1559}}
\address[Ray]{Chennai Mathematical Institute, H1, SIPCOT IT Park, Kelambakkam, Siruseri,
Tamil Nadu 603103, India}
\email{anwesh@cmi.ac.in}

\begin{abstract}
We consider the family of undirected Cayley graphs associated with odd cyclic groups, and study statistics for the eigenvalues in their spectra. Our results are motivated by analogies between arithmetic geometry and graph theory. 
\end{abstract}

\subjclass[2020]{05C25, 11H06, 11N60, 11M38} 
\date{\today} 
\keywords{arithmetic statistics, graph theory, eigenvalue distribution}

\maketitle
\section{Introduction}

It is well known that there are deep analogies between the arithmetic of curves in positive characteristic and the combinatorial properties of graphs. Ihara \cite{Ihara:1966} and Sunada \cite{Sunada:1986} defined the zeta function associated to a locally finite connected graph. The definition closely resembles the \emph{Selberg zeta function} and encodes asymptotics for closed walks in the graph. There is a close analogy between the zeta functions of curves in positive characteristic and the Ihara zeta functions associated to graphs. Further analogies have been realized by Baker and Norine \cite{Baker/Norine:2009}, who etablished an analogue of the Abel-Jacobi map and the Riemann-Roch theorem in graph theory. Recently, statistical questions have been framed and studied for curves varying in certain naturally occurring ensembles, see for instance, \cite{rudnick2008traces, kurlberg2009fluctuations, xiong2010fluctuations,bucur2011biased,entin2012distribution, thorne2014distribution, cheong2015distribution, bucur2016distribution, bucur2016statistics,sankar, ray2022statistics}. There has also been a growing interest in graph theoretic questions that are motivated by arithmetic statistics. The sandpile group (also known as the Jacobian group, the critical group, or the Picard group) is an abelian group naturally associated to a graph that codifies structural information of the graph. The statistics of such groups have been studied by various authors \cite{CKLPW, Wood}. More recently, the arithmetic statistics in the context of the Iwasawa theory of graphs has been studied in \cite{DLRV}.
\par It is natural to study statistical questions for the poles of the Ihara zeta function \cite{Ihara:1966,Bass,Terras}. For a finite regular graph, Ihara related the poles of the zeta function to the spectrum of the adjacency matrix. The spacing between the eigenvalues of regular graphs has been studied via numerical computation, and related to the Gaussian orthogonal ensemble, cf. \cite{JMRR, Newland}. Cayley graphs are central objects in the combinatorial and geometric group theory. They allow us to visualize a group with respect to a generating set. Its spectrum can be described very concretely \cite{Valette}. The main focus of this work is to study distribution questions for the family of eigenvalues that arise from Cayley graphs of finite odd cyclic groups.

\subsection{Main result} We now state our main result. Let $r\in \Z_{\geq 1}$ and let $\mathcal{F}$ be an infinite family of $r$-regular graphs with real eigenvalues. A \emph{counting function} $h: \mathcal{F}\rightarrow \Z_{\geq 1}$ is a function such that for any $x\in \mathbb{R}_{\geq 0}$, there are only finitely many graphs $X\in \mathcal{F}$ that satisfy $h(X)\leq x$. Note that any eigenvalue of an $r$-regular graph in $\mathcal{F}$ lies in the interval $[-r, r]$. Given an interval $J=[a,b]$ contained in $[-r, r]$, we wish to evaluate the probability that a random eigenvalue of a graph in $\mathcal{F}$ does lie in $J$. We consider families of Cayley graphs associated with cyclic groups with odd order. Given an odd natural number $n$, let $C_n$ denote the cyclic group with $n$ elements, which we identify with $\Z/n\Z$. For a pair $\gamma=(G, S)$ such that $S=S^{-1}$, recall that $X_\gamma$ is the associated Cayley graph. The association $\gamma\mapsto X_\gamma$ gives us a way to parametrize Cayley graphs. In greater detail, we set 
\[\cF_{r}:=\left\{X_\gamma\mid \gamma=(G, S); G=C_{n}\text{ for some odd number } n>1, \#S=r, S=S^{-1}\right\}.\]
Thus, $\cF_{r}$ is an infinite family of $r$-regular graphs; define a height function by setting $h(X_\gamma):=n$, where $\gamma=(C_n, S)$. There are only finitely many Cayley graphs for a given group, and hence, $h$ is a counting function. 

\par For $x\geq 0$, let $\sqrt{x}$ be the non-negative square root of $x$. Set $\delta$ denote the normalized arcsine distribution \[\delta(u):=\begin{cases} \frac{1}{\pi\sqrt{1-u^2}} & \text{ if }u\in (-1,1);\\
0 & \text{ otherwise;}\\
\end{cases}\]
and let $\delta^{(k)}(u)$ be the $k$-fold convolution 
\[\delta^{(k)}(u):=\int_{-1}^1\dots \int_{-1}^1 \delta(u_1)\delta(u_2)\cdots \delta(u_{k-1})\delta\left(u-\sum_{i=1}^{k-1} u_i\right) du_1\cdots du_{k-1}.\]
Given a graph $X$, let $\op{Sp}(X)$ denote its spectrum, i.e., the set of eigenvalues for its adjacency matrix.
\begin{theorem}\label{main thm}
    Let $r\in \Z_{\geq 2}$ and set $k:=\lfloor \frac{r}{2}\rfloor$. Let $J=[a, b]$ be an interval contained in $[-r, r]$. Set $[c,d]\subseteq [-k, k]$ to denote the interval $[a/2, b/2]$ (resp. $[(a-1)/2, (b-1)/2]$) if $r$ is even (resp. $r$ is odd). Then, as $n\rightarrow \infty$,
    \[\begin{split} \frac{\#\{(X, \alpha)\mid X\in \cF_r, \alpha\in \op{Sp}(X), h(X)=n, \alpha\in J\} }{\#\{(X, \alpha)\mid X\in \cF_r, \alpha\in \op{Sp}(X), h(X)=n\}}  
    =& \int_{c}^d \delta^{(k)}(u) du+ O_r(n^{-1+\epsilon}),\end{split}\]
    where the implicit constant in the error term only depends on $r$. 
\end{theorem}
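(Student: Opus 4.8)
The plan is to diagonalize the adjacency operator, reduce the theorem to a one–dimensional equidistribution statement for a single real statistic, and then control that statistic character by character.

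First I would use the character theory of $C_n=\Z/n\Z$. The adjacency operator of $X_\gamma$ is simultaneously diagonalized by the additive characters $\chi_j(x)=e^{2\pi i jx/n}$, $j=0,\dots,n-1$, with eigenvalue $\lambda_j(S)=\sum_{s\in S}e^{2\pi i js/n}$; since the $\lambda_j$ are read off the $n$ characters (and $\lambda_j=\lambda_{-j}$, so eigenvalue coincidences do not affect the leading term), I count over the $n$ characters. Because $n$ is odd, the only self-inverse element of $C_n$ is $0$, so a symmetric set $S$ of size $r$ consists of $k=\lfloor r/2\rfloor$ inverse-pairs $\{\pm s_i\}$, together with $\{0\}$ exactly when $r$ is odd. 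Hence $\lambda_j(S)=\epsilon_0+2\sum_{i=1}^k\cos(2\pi j s_i/n)$ with $\epsilon_0=0$ if $r$ is even and $\epsilon_0=1$ if $r$ is odd, so $\lambda_j\in[a,b]$ if and only if $T(S,j):=\sum_{i=1}^k\cos(2\pi j s_i/n)\in[c,d]$. With the number of admissible $S$ equal to $\binom{(n-1)/2}{k}$ in both parities, the theorem becomes the statement that the empirical measure $\mu_n$ of $T(S,j)$, over $k$-subsets $S\subseteq\{1,\dots,(n-1)/2\}$ and $j\in\{0,\dots,n-1\}$, satisfies $\mu_n([c,d])=\int_c^d\delta^{(k)}(u)\,du+O_r(n^{-1+\epsilon})$.

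Next I would identify the target and decouple the non-primitive characters. The arcsine law $\delta$ is exactly the distribution of $\cos\theta$ for $\theta$ uniform, and $\int_{-1}^1e^{itu}\delta(u)\,du=J_0(t)$; thus $\delta^{(k)}$ has characteristic function $J_0(t)^k$. Partitioning the characters by their exact order $q=n/\gcd(j,n)$, I would write $\mu_n=\frac1n\sum_{q\mid n}\varphi(q)\,\nu_q$, where $\nu_q$ is the empirical distribution of $T$ over the pairs with $\op{ord}\chi_j=q$; for such $j$ one has $\cos(2\pi j s_i/n)=\cos(2\pi j' s_i/q)$ with $\gcd(j',q)=1$, so $\nu_q$ is governed entirely by the residues $s_i\bmod q$. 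Since $\delta^{(k)}=\frac1n\sum_{q\mid n}\varphi(q)\delta^{(k)}$ and the interval (Kolmogorov) discrepancy is convex, it suffices to bound each single-modulus discrepancy $D_q:=\sup_{[c,d]}\bigl|\nu_q([c,d])-\int_c^d\delta^{(k)}\bigr|$, because then $\sup_{[c,d]}|\mu_n([c,d])-\int_c^d\delta^{(k)}|\le\frac1n\sum_{q\mid n}\varphi(q)\,D_q$.

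For the single-modulus estimate I would pass to characteristic functions via the Jacobi–Anger expansion $e^{it\cos\theta}=\sum_{m}i^mJ_m(t)e^{im\theta}$ together with the orthogonality $\frac1q\sum_{\ell}e^{2\pi i\ell M/q}=\mathbf{1}[q\mid M]$: the diagonal term $m_1=\dots=m_k=0$ reproduces $J_0(t)^k$ exactly, while each off-diagonal contribution carries Bessel factors $J_{m_i}(t)$ weighted by the number of solutions of the congruence $q\mid\sum_i m_is_i$, which is a $\gcd(m_1,\dots,m_k,q)/q$ fraction. Because $J_{\ell q}(t)$ is super-polynomially small for $|t|\lesssim q$, the characteristic functions of $\nu_q$ and of $\delta^{(k)}$ agree to very high precision throughout $|t|\lesssim q$, so essentially the whole discrepancy comes from the Fourier-inversion scale. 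Inserting a Beurling–Selberg majorant and minorant of $\mathbf{1}_{[c,d]}$ with Fourier support in $[-T,T]$ and taking $T\asymp q$, I expect $D_q=O_k(q^{-1+\epsilon})$. Substituting and using $\varphi(q)\le q$ and $\tau(n)=O_\epsilon(n^\epsilon)$ gives $\frac1n\sum_{q\mid n}\varphi(q)\,q^{-1+\epsilon}\le\frac1n\sum_{q\mid n}q^{\epsilon}=O(n^{-1+\epsilon})$, as claimed.

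I expect the main obstacle to be the sharp single-modulus bound $D_q=O_k(q^{-1+\epsilon})$, for two reasons. First, the arcsine density $\delta$ blows up at $\pm1$, so $\delta^{(k)}$ need not be bounded for small $k$ (notably $k=1$, i.e.\ $r\in\{2,3\}$, and a possible logarithmic singularity for $k=2$); this forces a careful treatment of the endpoints in the smoothing step, where the unbounded density interacts with the $1/T$ majorant error. Second, one must pass from an independent-residue model to the genuine subset model, discarding the $O(q^{k-1})$ tuples with a collision modulo $q$ and controlling the reduction of $s_i\in\{1,\dots,(n-1)/2\}$ modulo $q$, all uniformly in $q\mid n$ so that the divisor sum telescopes. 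Crucially, obtaining the exponent $-1+\epsilon$ rather than the $-1/2$ produced by a naive Berry–Esseen argument relies precisely on the super-polynomial smallness of $J_{\ell q}(t)$ for $|t|\lesssim q$, which is what permits pushing the smoothing parameter $T$ all the way up to order $q$.
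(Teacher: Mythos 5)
Your high-level reduction (characters of $C_n$, the statistic $\sum_i\cos(2\pi j s_i/n)$, partition of characters by their order $q=n/\gcd(j,n)$, removal of collision tuples, and a divisor sum at the end) runs parallel to the paper's, but your analytic core --- Jacobi--Anger, characteristic functions, Beurling--Selberg --- is genuinely different: the paper never does Fourier analysis in the variable $u$, instead counting points of the grid $\tfrac{1}{n_1}\Z^k$ in the folded region $B_k(I)\subseteq[0,1/2)^k$ and comparing with volume by an elementary shift argument (Lemma \ref{shift lemma}, Proposition \ref{size of Omega_n I prop}), the arcsine law entering only through the change of variables of Lemma \ref{lem:vol-computation}. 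The genuine gap in your proposal is exactly the step you flag but defer: the bound $D_q=O_k(q^{-1+\epsilon})$ is \emph{not attainable} by band-limited smoothing on the $u$-side when $k=1$ (i.e.\ $r\in\{2,3\}$) and the interval has an endpoint at $\pm1$ --- for instance $J=[a,r]$, touching the spectral edge, which the theorem permits. Indeed, any minorant $F^-\le\mathbf{1}_{[c,1]}$ with Fourier transform supported in $[-T,T]$ and bounded sup-norm satisfies $F^-(1)\le 0$ (continuity plus $F^-\le 0$ beyond $1$) and a Bernstein bound $\|(F^-)'\|_\infty=O(T)$, hence $\mathbf{1}_{[c,1]}-F^-\ge\tfrac12$ on an interval of length $\asymp 1/T$ abutting $u=1$ from inside; integrating this deficit against $\delta(u)\asymp(1-u)^{-1/2}$ forces $\int(\mathbf{1}_{[c,1]}-F^-)\,\delta\,du\gtrsim T^{-1/2}$, no matter how well the characteristic functions agree. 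Since the characteristic-function comparison itself caps $T$ at order $q$ (the first correction $J_q(t)$ ceases to be small once $|t|\asymp q$), your method can give only $O(q^{-1/2})$ per modulus, hence $O(n^{-1/2+\epsilon})$ overall for such intervals --- short of the claimed $O(n^{-1+\epsilon})$. The repair is to smooth, or simply count, on the $\theta$-side of the circle \emph{before} pushing forward through the cosine; but that is precisely the paper's geometric argument, and for $k=1$ the region $B_1(I)$ is a single arc, so the Fourier machinery there is both lossy and unnecessary.

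A second, lesser issue: the passage from the independent-residue model to the genuine ensemble --- distinct elements $s_i\in\{1,\dots,(n-1)/2\}$ reduced modulo $q\mid n$ --- is named but not carried out in your proposal, and it is not a routine check; it is the paper's main combinatorial work, namely the exact product decomposition $\#\cS'(n,k,m)=\left(\frac{(n,m)+1}{2}\right)^k\#\cS'(n_1,k,m_1)$ of Lemma \ref{lemma 4.5}, the $k!$-to-one folding map of Proposition \ref{prop 4.6}, and the collision estimates of Lemmas \ref{lemma 3.2} and \ref{bounding S'' lemma}. You identify precisely these tasks, which is reassuring, but without them the ``model'' computation of $\hat\nu_q$ is about a different ensemble. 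With that passage supplied, your route does prove the theorem when $k\ge 3$ (where $\delta^{(k)}$ is bounded) and when $k=2$ (the logarithmic singularity of $\delta^{(2)}$ at $u=0$ costs only factors your $\epsilon$ absorbs); it is only the case $k=1$ with edge-touching intervals that must be handled by the elementary circle-side count, i.e.\ by reverting to the paper's method.
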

The above theorem is a direct consequence of Theorem \ref{main theorem 5.5}. We note that the eigenvalues in the spectrum of a Cayley graph in $\cF_1$ are all equal to $1$, hence, we only consider the case when $r\geq 2$. The above probabilities are reinterpreted in terms of certain lattice point counts in certain regions in Euclidean space. These lattice point counts are estimated via a combination combinatorial and analytic techniques. 

\par The above result motivates the study of eigenvalue distributions in more general families of $r$-regular Cayley graphs, ordered by the size of the underlying group. The authors expect that the study of such questions shall lead to many interesting developments in the future.

\subsection*{Acknowledgements} The authors would like to thank the anonymous referee for carefully checking the manuscript, and Daniel Valli\`eres for helpful discussions. This work is supported by  the Natural Sciences and Engineering Research Council of Canada, Discovery Grant 355412-2022, the Fonds de recherche du Qu\'ebec - Nature et technologies, Projet de recherche en \'equipe 300951,  the Simons Foundation, and the Centre de recherches math\'ematiques. 

\section*{Statements and Declarations}

\subsection*{Competing Interests} The authors report there are no competing interests to declare.

\subsection*{Data Availability} There is no data associated to the results of this manuscript.  

\section{Preliminaries}%\label{s 2}
\subsection{Basic definitions and properties of graphs}
\par In this section, we recall a few preliminary notions in graph theory. For a more detailed exposition, we refer to \cite{Sunada:2013}. The data of a directed graph $X$ consists of a tuple $(V_X, \mathbf{E}_X)$, where $V_X$ is the set of vertices and $\mathbf{E}_X$ is the set of directed edges. The map $\op{inc}: \bE_X\rightarrow V_X\times V_X$ is the incidence map. The source and target maps $o: \bE_X\rightarrow V_X$ and $t:\bE_X\rightarrow V_X$ send a directed edge to the source and target vertices respectively, and $\op{inc}(e)=(o(e), t(e))$. Given $e\in \bE_X$, we say that $e$ joins $o(e)$ to $t(e)$. We shall make the following assumptions on our graphs $X$.
\begin{enumerate}
    \item The sets $V_X$ and $\bE_X$ are finite.
    \item Given vertices $v_1\in V_X$ and $v_2\in V_X$ (not necessarily distinct), there is at most one edge joining $v_1$ to $v_2$. 
    \item Given any edge $e\in \bE_X$, there is an edge $\bar{e}$ such that $o(\bar{e})=t(e)$ and $t(\bar{e})=o(e)$. Note that if $o(e)=t(e)$ (i.e., $e$ is a loop), then $\bar{e}=e$. 
\end{enumerate}
Consider an equivalence relation on the set $\bE_X$ of directed edges, identifying $e$ with $\bar{e}$. The equivalence classes consist of undirected edges of the graph and are denoted by $E_X$. Given $v\in V_X$, we consider the set of directed edges that orginate at $v$, 
\[\bE_{X, v}:=\{e\in \bE_X\mid o(e)=v\}.\] The valency of $v$ is defined as follows $\op{val}_X(v):=\# \bE_{X,v}$. Alternatively, the valency is also the number of undirected edges which contain $v$ as one of its vertices. The graph is $r$-regular if $\op{val}_X(v)=r$ for all $v\in V_X$. Let $g:=\# V_X$ and choose a labelling on the vertices $V_X=\{v_1, \dots, v_g\}$. Consider the $g\times g$ diagonal matrix defined by 
\[d_{i, j}=\begin{cases} & 0 \text{ if }i\neq j;\\
& \op{val}_X(v_i) \text{ if }i=j.\\
\end{cases}\]
The adjacency matrix $A=(a_{i,j})$ is defined as follows 
\[a_{i,j}=\begin{cases} & \text{number of undirected edges between }v_i\text{ and }v_j \text{ if }i\neq j;\\
& \text{twice the number of undirected loops at }v_i\text{ if }i=j.\\
\end{cases}\]

The spectrum $\op{Sp}(X)$ is the set of eigenvalues $\alpha$ of the adjacency matrix $A$. The matrix $Q:=D-A$ is called the \emph{Laplacian matrix} of the graph $X$. Let us explain the explicit relationship between the eigenvalues of the adjacency matrix and the poles of \emph{Ihara zeta function} associated to $X$. A path in $X$ is a sequence $C=a_1,\dots ,a_k$, where $(a_1,\dots, a_k)$ is a tuple of directed edges of $X$ such that $t(a_i)=o(a_{i+1})$. The length of $C$ is the number of edges $\nu(C):=k$. A closed path is a path for which $o(a_1)=t(a_k)$, i.e. a path starting and ending at the same vertex. The path is said to have a \emph{backtrack} if $a_{j+1}=\overline{a_j}$ for some $j$ in the range $1\leq j \leq k-1$. The path is said to have a \emph{tail} if $a_k=\overline{a_1}$. Given a closed path $D$, and an integer $m>0$, $D^m$ is the path obtained by moving around $D$ $m$-times. A closed path $C$ is said to be \textit{primitive} if $C$ has no backtracks or tails and $C\neq D^m$ for any closed path $D$ and $m>1$. Two closed paths are equivalent if one can be obtained from the other by changing the starting vertex. Note that if $C$ is primitive, there are exactly $\nu(C)$ paths that are equivalent to $C$. A \emph{prime} path is an equivalence class of primitive paths. We shall sometimes denote the equivalence class of a primitive path $C$ by $[C]$, however, when there is no cause for confusion, we simply use $C$ itself to denote the prime path with primitive representative $C$. 

\par The \emph{Ihara polynomial} $h_X(u)$ denotes the determinant
\[h_X(u):=\det(I-Au+Qu^2).\] Suppose that $X$ is connected with no vertex of degree $1$.
With these definitions in mind, the Ihara zeta function is defined to be the product
\[\zeta_X(u)=\prod_C (1-u^{\nu(C)})^{-1},\]
where the product ranges over all primes in $X$, ordered according to length. Then the Ihara $3$-term determinant formula states that
\[\zeta_X(u)^{-1}=(1-u^2)^{r(X)-1}h_X(u),\]
where $r(X)$ is rank of the fundamental group, $r(X)=|E|-|V|+1$. Note that when $X$ is $r$-regular, where $r\geq 2$. Then $Q=(r-1)\op{Id}$, and 
\[h_X(u)=\prod_{\alpha\in \op{Sp}(X)}\left(1-\alpha u+(r-1)u^2\right).\]
Therefore, the roots of $h_X(u)$ are given by 
\[r_\alpha^{\pm} =\frac{\alpha\pm \sqrt{\alpha^2-4(r-1)}}{2(r-1)},\]
where $\alpha\in \op{Sp}(X)$.
\par Let $G$ be a finite group and $S$ be a subset of $G$ such that $S^{-1}:=\{s^{-1}\mid s\in S\}$ is equal to $S$. Associated to the pair $\gamma=(G, S)$ is the undirected Cayley graph $X_\gamma$. The vertices $V_\gamma:=V_{X_\gamma}$ is the set $\{v_g\mid g\in G\}$, and there is an undirected edge joining $v_g$ to $v_h$ if and only if $gh^{-1}$ is contained in $S$. Consider the special case when $G$ is a finite abelian group and $\widehat{G}:=\op{Hom}(G, \mathbb{C}^\times)$ be the group of characters defined on $G$. A Cayley graph $X_\gamma$ is $r$-regular, where, $r:=\# S$. Given a character, $\chi\in \widehat{G}$, set $\lambda_\chi(X_\gamma):=\sum_{s\in S} \chi(s)$. 
As $\chi$ ranges over $\widehat{G}$, the eigenvalues $\lambda_\chi(X_\gamma)$ are all distinct (see Section 1.4.9 on Cayley graphs in \cite{Brouwer-Haemers}). 
The spectrum $\op{Sp}(X_\gamma)$ is the set of eigenvalues of the adjacency matrix and is equal to the set $\{\lambda_\chi(X_\gamma)\mid \chi \in \widehat{G}\}$. Since $S=S^{-1}$, we find that the eigenvalues $\lambda_\chi(X_\gamma)$ are all real and contained in $[-r, r]$.

\section{Eigenvalues of Cayley graphs}

Let $r\in \Z_{\geq 2}$, and recall that $\cF_r$ is the family of Cayley graphs 
\[\cF_{r}:=\left\{X_\gamma\mid \gamma=(G, S); G=C_{n}\text{ for some odd number } n>1, \#S=r, S=-S\right\},\]
where we are writing $C_n$ with additive notation, since we identify it with the group $\Z/n\Z$.

Since $n$ is assumed to be odd, we find that $r=\#S$ is odd precisely when $S$ contains $0$. In this case, we write,  
\[\begin{cases}
r=2k & \text{ if }r\text{ is even};\\
r=2k+1 & \text{ if }r\text{ is odd.}\\
\end{cases}\]
In either case, we may then write $S\backslash\{0\}=T\cup \left(-T\right)$, where $T=\{a_1, \dots, a_k\}$ and $1\leq a_i\leq \left( \frac{n-1}{2} \right)$ for $i=1, \dots, k$. We shall assume without loss of generality that $1\leq a_1<a_2<\dots <a_k\leq \left( \frac{n-1}{2} \right)$. 

Let $S$ be a subset of $G=C_n$ such that $\#S=r$ and $S=-S$. We set $\gamma:=(G, S)$. %As $\chi$ ranges over $\widehat{G}$, the eigenvalues $\lambda_\chi(X_\gamma)$ are all distinct. 
Recall that the spectrum of $X_\gamma$ is given by $\op{Sp} (X_\gamma)=\{\lambda_{\chi}(X_\gamma)\mid \chi\in \widehat{G}\}$.
For an integer $m\in [0, n-1]$, let $\chi_m\in \widehat{C_{n}}$ be the character defined by $\chi_m(x):=\op{exp}\left(\frac{2\pi i m x}{n}\right)$. We set $\lambda_m(\gamma):=\lambda_{\chi_m}(X_\gamma)$ and we find that 
\begin{equation}\label{lambda_m equation}\lambda_m(\gamma)=\begin{cases}  \sum_{j=1}^k 2\op{cos}\left(\frac{2\pi  m a_j}{n}\right) &\text{ if }r\text{ is even};\\  1+\sum_{j=1}^k 2\op{cos}\left(\frac{2\pi  m a_j}{n}\right) &\text{ if } r \text{ is odd.}
\end{cases}\end{equation}
In particular, we note that $\lambda_0(\gamma)=r$ is the largest eigenvalue.

\par We set $A_k(n)$ to denote the set of all vectors $\vec{a}=(a_1, \dots, a_k)\in \Z^k$ such that $1\leq a_1<a_2<\dots< a_k\leq \left(\frac{n-1}{2}\right)$. Given $\vec{a}\in A_k(n)$, let $\gamma_{\vec{a}}:=(C_n, S_{\vec{a}})$, where $S_{\vec{a}}$ is the set with $r$ elements such that $S\backslash \{0\}=\{a_1, -a_1, a_2, -a_2, \dots, a_r, -a_r\}$. We set
\[\tau_m(\vec{a})=\tau_m(\gamma_{\vec{a}}):=\sum_{i=1}^k\op{cos}\left(\frac{2\pi m a_i}{n}\right).\]
Therefore, from \eqref{lambda_m equation}, we find that
\[\lambda_m(\vec{a})=\lambda_m(\gamma_{\vec{a}})=\begin{cases}
 2\tau_m(\vec{a}) &\text{ if }r\text{ is even};\\  1+ 2\tau_m(\vec{a}) &\text{ if } r \text{ is odd.}
\end{cases}\]
We find that $\tau_m(\vec{a})\in [-k, k]$, and that $\lambda_m(\vec{a})\in [-r, r]$. 

Let $\mathcal{S}(n,k)$ be the set of pairs $\left(\gamma_{\vec{a}}, \tau_m(\vec{a})\right)$, where $\vec{a}\in A_k(n)$, and $m\in [0, n-1]$. 
Let $\mathcal{S}(n,k,m)$ be the subset of $\mathcal{S}(n, k)$ where $m$ is fixed.  Given and interval $[e,f]\subset \mathbb{R}$, we set $[e,f]_{\Z}:=\Z\cap [e,f]$. We identify $\mathcal{S}(n,k)$ with $A_k(n)\times \left[0, n-1\right]_{\Z}$. Moreover, we identify $\mathcal{S}(n,k,m)$ with $A_k(n)$, upon identifying $\vec{a}\in A_k(n)$ with $(\vec{a},m)\in A_k(n)\times \left[0, n-1\right]_{\Z}$. We refer to $\mathcal{S}(n,k,m)$ as the \emph{$m$-slice} in $\mathcal{S}(n,k)$. Let $J=[a,b]\subseteq [-r, r]$ be a closed interval and $I=[c,d]\subseteq [-k, k]$ defined by
\[[c,d]=\begin{cases}
[a/2, b/2] &\text{ if }r\text{ is even};\\
[(a-1)/2, (b-1)/2] &\text{ if }r\text{ is odd}.
\end{cases}\]
Note that $\lambda_m(\vec{a})\in J$ if and only if $\tau_m(\vec{a})\in I$.
Set $\mathcal{S}_I(n,k)\subseteq \mathcal{S}(n,k)$ (resp. $\mathcal{S}_I(n,k,m)\subseteq \mathcal{S}(n,k,m )$) to denote the subset for which $\tau_m(\vec{a})\in I$. 

\par We have the following identifications
\begin{equation}\label{identifications}\begin{split}
&\mathcal{S}(n,k)=\{(X, \alpha)\mid X\in \cF_r, \alpha\in \op{Sp}(X), h(X)=n\}, \\
&\mathcal{S}_I(n,k)=\{(X, \alpha)\mid X\in \cF_r, \alpha\in \op{Sp}(X), h(X)=n, \alpha\in J\}. \\
\end{split}\end{equation}
It is easy to see that
\[\begin{split}
& \#\mathcal{S}(n, k, m)=\binom{\frac{n-1}{2}}{k},\\
& \#\mathcal{S}(n, k)=n\binom{\frac{n-1}{2}}{k}.\\
\end{split}\]
For $n, k\in \Z_{\geq 1}$ such that $n$ is odd and $k\leq \frac{n-1}{2}$, set
\[\op{Prob}_I\left(n, k\right):=\frac{\#\mathcal{S}_I(n, k)}{\#\mathcal{S}(n,k)}.\]
Note that after making the identifications \eqref{identifications}, we have that
\begin{equation}\label{probI}\op{Prob}_I\left(n, k\right)=\frac{\#\{(X, \alpha)\mid X\in \cF_r, \alpha\in \op{Sp}(X), h(X)=n,\text{ and }\alpha\in J\}}{\#\{(X, \alpha)\mid X\in \cF_r, \alpha\in \op{Sp}(X),h(X)=n\}}.\end{equation}

\par For a fixed integer $k\in \Z_{\geq 1}$, we wish to compute the limit 
\[\begin{split}\op{Prob}_I(k):=& \lim_{n \rightarrow  \infty} \op{Prob}_I\left(n, k\right),\\ 
=& \lim_{n\rightarrow \infty} \left(\frac{ (2^k k!) \#\mathcal{S}_I(n, k)}{n^{k+1}}\right).\end{split}\] 
We express $\op{Prob}_I(k)$ as an average
\begin{equation}\label{prob sum eqn}\op{Prob}_I(n, k)=\frac{1}{n}\sum_{m=0}^{n-1} \op{Prob}_I(n, k, m),\end{equation}
where 
\[\op{Prob}_I(n, k, m):=\frac{\#\mathcal{S}_I(n, k, m)}{\#\mathcal{S}(n, k, m)}=\frac{\#\mathcal{S}_I(n, k, m)}{\binom{\frac{n-1}{2}}{k}}.\]

In the next section, we shall set up a geometric interpretation for the probability $\op{Prob}_I(n, k, m)$. Let $B_k$ denote the $k$-dimensional box 
\[B_k:=\{(x_1, \dots, x_k)\mid 0\leq x_i< 1/2\text{ for all }i\},\] and note that $\op{Vol}(B_k)=2^{-k}$. Let $B_k(I)$ be the subset of $B_k$ defined by
\begin{equation}\label{def of B_k(I)}B_k(I):=\{(x_1, \dots, x_k)\in B_k\mid \sum_{i=1}^k \op{cos}(2\pi x_i)\in I\}.\end{equation} 
\par Let us compute $\op{Vol}\left(B_k(I)\right)$ for an interval $I=[c,d]$. Let $D_k(I):=\{(u_1, \dots, u_k)\in [-1,1]^k\mid \sum_i  u_i\in I\}$, and $\Phi$ be the function taking $D_k(I)$ to $B_k(I)$ defined by $\Phi(u_1, \dots, u_k)=\left(\frac{1}{2\pi }\op{cos}^{-1}(u_1), \dots, \frac{1}{2\pi }\op{cos}^{-1}(u_k)\right)$. For $x\geq 0$, let $\sqrt{x}$ be the non-negative square root of $x$. Recall that \[\delta(u):=\begin{cases} \frac{1}{\pi\sqrt{1-u^2}} & \text{ if }u\in (-1,1);\\
0 & \text{ otherwise;}\\
\end{cases}\]
and let $\delta^{(k)}(u)$ be the $k$-fold convolution 
\[\delta^{(k)}(u):=\int_{-1}^1\dots \int_{-1}^1 \delta(u_1)\delta(u_2)\cdots \delta(u_{k-1})\delta\left(u-\sum_{i=1}^{k-1} u_i\right) du_1\cdots du_{k-1}.\]

\begin{lemma} \label{lem:vol-computation}
Let $I=[c,d]\subseteq [-k,k]$ be an interval, $k\in \Z_{\geq 1}$, and let $B_k(I)\subset \mathbb{R}^k$ be the region defined by \eqref{def of B_k(I)}. Then, its volume is given by
\[\op{Vol}\left(B_k(I)\right)=2^{-k}\int_{c}^d \delta^{(k)}(u) du.\]
\end{lemma}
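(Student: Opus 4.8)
The plan is to pass from the box $B_k$ to the region $D_k(I)$ by the coordinatewise substitution $u_i = \op{cos}(2\pi x_i)$ (the inverse of the map $\Phi$ introduced above), and then to recognize the resulting integral as a $k$-fold convolution integrated over the slab $\sum_i u_i \in I$. The appearance of the arcsine density $\delta$ is, in this language, nothing but the pushforward of Lebesgue measure on $[0,1/2)$ under $x\mapsto \op{cos}(2\pi x)$.

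First I would carry out the change of variables. On the open box $(0,1/2)^k$ the map $\Phi$, with inverse $x_i = \frac{1}{2\pi}\op{cos}^{-1}(u_i)$, is a diffeomorphism onto $(-1,1)^k$, since $\op{cos}$ is strictly monotone on $[0,\pi)$; as each coordinate transforms separately, the Jacobian is diagonal. Differentiating gives $\frac{\partial x_i}{\partial u_i} = -\frac{1}{2\pi\sqrt{1-u_i^2}}$, using $\op{sin}(2\pi x_i) = \sqrt{1-u_i^2}\geq 0$ for $2\pi x_i \in [0,\pi)$. Hence $\lvert\det D\Phi\rvert = \prod_{i=1}^k \frac{1}{2\pi\sqrt{1-u_i^2}} = \prod_{i=1}^k \tfrac{1}{2}\delta(u_i)$, and therefore
\[
\op{Vol}\left(B_k(I)\right) = \int_{B_k(I)} dx_1\cdots dx_k = \int_{D_k(I)} \prod_{i=1}^k \tfrac{1}{2}\delta(u_i)\, du_1\cdots du_k = 2^{-k}\int_{D_k(I)}\prod_{i=1}^k\delta(u_i)\,du.
\]
The boundary set where some $x_i\in\{0,1/2\}$ is Lebesgue-null and contributes nothing; the only point to watch is the blow-up of $\delta$ at $u_i=\pm 1$, but $\delta$ is integrable on $[-1,1]$ (it is a probability density, of total mass one), so every integral above converges.

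Second, I would identify $\int_{D_k(I)}\prod_i \delta(u_i)\,du$ with $\int_c^d \delta^{(k)}(u)\,du$. Since $D_k(I) = \{u\in[-1,1]^k : \sum_i u_i \in I\}$, this integral equals $\int_{[-1,1]^k} \mathbf{1}_{[c,d]}\!\left(\sum_i u_i\right)\prod_i \delta(u_i)\,du$. Holding $u_1,\dots,u_{k-1}$ fixed and substituting $s = \sum_{i=1}^k u_i$ in the innermost variable (so $du_k = ds$, and $\delta(u_k)=\delta(s-\sum_{i<k}u_i)$ already enforces $u_k\in[-1,1]$), then applying Fubini to move the $s$-integration outside the remaining $u_1,\dots,u_{k-1}$ integrations, the inner bracket becomes precisely the defining convolution $\delta^{(k)}(s)$. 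This produces $\int_c^d \delta^{(k)}(s)\,ds$, completing the proof.

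The step demanding the most care is the change of variables: one must verify that the coordinate map is a bijection between the open box and the open cube, that the boundary is negligible, and that the integrable singularities of $\delta$ at $\pm 1$ obstruct neither the substitution nor the subsequent Fubini rearrangement (all integrands being nonnegative, Tonelli applies directly). Once these points are settled, both steps are routine computations.
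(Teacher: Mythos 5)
Your proof is correct and follows essentially the same route as the paper: the coordinatewise change of variables $u_i=\op{cos}(2\pi x_i)$ with Jacobian $\prod_i \frac{1}{2\pi\sqrt{1-u_i^2}}$, followed by recognizing the integral over $D_k(I)$ as $\int_c^d \delta^{(k)}(u)\,du$. The only difference is that you spell out the steps the paper compresses into one display --- the bijectivity of $\Phi$, the negligibility of the boundary, the integrable singularities at $u_i=\pm 1$, and the Fubini/Tonelli rearrangement identifying the slab integral with the convolution --- all of which are handled correctly.
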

\begin{proof}
   By the change of variables theorem, we find that 
\begin{equation*}%\label{vol brI}
\begin{split}\op{Vol}\left(B_k(I)\right)=& \int_{D_k(I)} |\op{det} D\Phi|\\ =& (2\pi)^{-k}\int_{D_k(I)} \frac{1}{\left(\sqrt{1-u_1^2}\sqrt{1-u_2^2}\dots \sqrt{1-u_r^2}\right)} du_1\dots d u_r \\
= & 2^{-k}\int_{c}^d \delta^{(k)}(u) du.
\end{split}\end{equation*}
\end{proof}

\section{A geometric interpretation}
\par We wish to provide a geometric reinterpretation for the quantities $\#\mathcal{S}_I(n, k, m)$ and $\#\mathcal{S}(n, k, m)$ introduced in the previous section. Given an integer $n\geq 1$, let $L_n\subset \mathbb{R}^k$ be the lattice consisting of all vectors of the form $(\frac{a_1}{n}, \dots, \frac{a_k}{n})$, where $(a_1, \dots, a_k)\in \Z^k$. Let $L_n'$ be the subset of $L_n$ consisting of such vectors $(\frac{a_1}{n}, \dots, \frac{a_k}{n})$ for which the coordinates are all mutually distinct. Set $\Omega_n:=B_k\cap L_n$, $\Omega_n(I):=B_k(I)\cap L_n$, $\Omega_n':=B_k\cap L_n'$ and $\Omega_n'(I):=B_k(I)\cap L_n'$ and let $\kappa:\mathbb{R}\rightarrow [0,1/2]$ be the function defined as follows
\begin{equation}\label{eq:omega}\kappa(x):=\begin{cases} \{x\} & \text{ if } \{x\}\leq 1/2;\\
1-\{x\} & \text{ if } \{x\}> 1/2.
\end{cases}\end{equation} Here, $\{x\}$ denotes the fractional part of $x$. We note that $\op{cos} (2\pi x)=\op{cos} \left(2 \pi \kappa(x)\right)$. For a pair $\gamma=(G, S)$, such that $G=C_{n}$ and $S$ a subset of $G$ with $S=S^{-1}$, we recall from the previous section that $S\backslash \{0\}=T\cup(-T)$, where $T=\{a_1, \dots, a_k\}$, and $\vec{a}=(a_1, \dots, a_k)\in A_k(n)$. Let $\sigma$ be a permutation of $\{1, \dots, k\}$ and $m\in [1, n-1]_{\Z}$. Write $m=dm_1$, $n=dn_1$, where $d:=(m, n)$. Consider a point $(\vec{a}, m)\in \mathcal{S}(n, k, m)$. Associate to this point and to the permutation $\sigma$, the point \begin{equation}\label{def of P}\begin{split}P(\vec{a}, \sigma, m, n):=& \left(\kappa\left(\frac{ma_{\sigma(1)}}{n}\right), \kappa\left(\frac{ma_{\sigma(2)}}{n}\right),\dots, \kappa\left(\frac{ma_{\sigma(k)}}{n}\right)\right),\\ 
=& \left(\kappa\left(\frac{m_1a_{\sigma(1)}}{n_1}\right), \kappa\left(\frac{m_1a_{\sigma(2)}}{n_1}\right),\dots, \kappa\left(\frac{m_1a_{\sigma(k)}}{n_1}\right)\right). \end{split}\end{equation} 
It is clear that $P(\vec{a}, \sigma, m, n)$ is contained in $\Omega_{n_1}$. 

\par For ease of notation, identify the $m$-slice $\mathcal{S}(n, k, m)$ with $A_k(n)$, by identifying $\vec{a}$ with $(\vec{a}, m)$.
For $m\in [0, n-1]_{\Z}$, it conveniences us to further subdivide $\mathcal{S}(n, k, m)$ into two disjoint subsets $\mathcal{S}'(n, k, m)$ and $\mathcal{S}''(n, k, m)$ as follows. Set $\mathcal{S}'(n, k, 0):=\emptyset$ and $\mathcal{S}''(n, k, 0):=\mathcal{S}(n, k, 0)$. For $m\in [1, n-1]_{\Z}$, let $\mathcal{S}'(n, k, m)$ consist of all $\vec{a}\in A_k(n)$ such that for $i<j$, 
\[a_i\not \equiv \pm a_j\mod{n_1},\]
and set $\mathcal{S}''(n, k, m)$ to be the complement $\mathcal{S}(n, k, m)\backslash \mathcal{S}'(n, k, m)$. We shall set $\mathcal{S}'(n, k)$ (resp. $\mathcal{S}''(n, k)$) to denote the disjoint union of slices $\mathcal{S}'(n, k, m)$ (resp. $\mathcal{S}''(n, k, m)$) as $m$ ranges through $[0,n-1]_{\Z}$. It shall turn out that the slices $\mathcal{S}'(n, k, m)$ are better suited to combinatorial arguments than $\mathcal{S}(n, k, m)$. Lemma \ref{bounding S'' lemma} will show that when $k$ is fixed, $\# \mathcal{S}''(n, k)$ is small in comparison to $\# \mathcal{S}(n, k)$, as $n$ goes to $\infty$. Given an interval $I\subseteq [-k, k]$, we set $\mathcal{S}_I'(n,k,m)$ (resp. $\mathcal{S}_I''(n,k,m)$) to be the subset of $\mathcal{S}'(n,k,m )$ (resp. $\mathcal{S}''(n,k,m)$) for which $\tau_m(\vec{a})\in I$.

\begin{lemma}\label{lemma 3.2}
    There is a constant $C_k>0$ (depending on $k$), independent of $m$ and $n$ such that
    \[\#\mathcal{S}''(n, k, m) < \frac{ C_k n^k}{n_1}.\] 
    The constant $C_k$ can be taken to be a polynomial in $k$.
\end{lemma}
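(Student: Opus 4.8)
The plan is to reformulate $\mathcal{S}''(n,k,m)$ as a set of $k$-subsets with a forbidden pair, and then bound its size by a union bound over such pairs. Recall that $A_k(n)$ is in bijection with the $k$-element subsets of $\left[1,\tfrac{n-1}{2}\right]_{\Z}$, an increasing tuple $\vec{a}=(a_1,\dots,a_k)$ corresponding to its underlying set $\{a_1,\dots,a_k\}$. Under this bijection $\mathcal{S}''(n,k,m)$ is exactly the collection of $k$-subsets that contain at least one pair $\{b,b'\}$ of distinct elements with $b\equiv\pm b'\pmod{n_1}$, where $n_1=n/\gcd(m,n)$. (When $m=0$ we have $n_1=1$, every pair satisfies this, and $\mathcal{S}''(n,k,0)=\mathcal{S}(n,k,0)$, consistent with the definition; the case $k=1$ is immediate, since then there are no pairs and $\mathcal{S}''(n,1,m)=\emptyset$ for $m\neq 0$. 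So I would assume $k\geq 2$.) First I would record this reformulation and the resulting union bound
\[
\#\mathcal{S}''(n,k,m)\ \leq\ N_{\mathrm{bad}}\cdot \binom{\tfrac{n-1}{2}-2}{k-2},
\]
where $N_{\mathrm{bad}}$ is the number of \emph{bad} $2$-subsets $\{b,b'\}\subseteq\left[1,\tfrac{n-1}{2}\right]_{\Z}$ (those with $b\equiv\pm b'\pmod{n_1}$), and the binomial factor counts the ways to complete a given bad pair to a $k$-subset from the remaining elements. A $k$-subset with several bad pairs is over-counted, which is harmless for an upper bound.

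Second, I would estimate $N_{\mathrm{bad}}$. For a fixed residue class $c$ modulo $n_1$, the elements of $\left[1,\tfrac{n-1}{2}\right]_{\Z}$ in that class form an arithmetic progression of common difference $n_1$ inside an interval of length $\tfrac{n-1}{2}$, so their number $N_c$ satisfies $N_c\leq \tfrac{n-1}{2n_1}+1$. Summing $\binom{N_c}{2}$ over $c$ and using $\sum_c N_c=\tfrac{n-1}{2}$ together with $\max_c N_c\leq \tfrac{n-1}{2n_1}+1$ bounds the number of pairs with $b\equiv b'\pmod{n_1}$ by $O(n^2/n_1)$; the pairs with $b\equiv -b'\pmod{n_1}$ (equivalently $b+b'\equiv 0\pmod{n_1}$) are controlled identically, and since $n_1\leq n$ the lower-order ``$+1$'' contributions are absorbed into the same $O(n^2/n_1)$. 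This yields $N_{\mathrm{bad}}\leq C'\,n^2/n_1$ for an absolute constant $C'$, uniformly in $m$ and $n$.

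Finally, I would combine the two estimates. Using $\binom{\tfrac{n-1}{2}-2}{k-2}\leq \tfrac{1}{(k-2)!}\left(\tfrac{n}{2}\right)^{k-2}$, I obtain
\[
\#\mathcal{S}''(n,k,m)\ \leq\ \frac{C'\,n^2}{n_1}\cdot\frac{1}{(k-2)!}\left(\frac{n}{2}\right)^{k-2}\ =\ \frac{C_k\,n^k}{n_1},\qquad C_k=\frac{C'}{2^{k-2}(k-2)!}.
\]
This $C_k$ is bounded above independently of $k$ (it even decays), hence is a fortiori bounded by a polynomial in $k$; had one instead union-bounded over the $\binom{k}{2}$ pairs of coordinate positions one would pick up a factor of size $O(k^2)$, matching the polynomial bound asserted in the statement. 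The only genuinely delicate point, and the step I expect to be the main obstacle, is the uniformity in the second paragraph: the bound must hold simultaneously for all $m$, and the correct quantity governing the size of $\mathcal{S}''(n,k,m)$ is $n_1=n/\gcd(m,n)$ rather than $m$ itself, so the near-congruent pair count must be phrased purely in terms of $n_1$. Everything else is routine binomial bookkeeping.
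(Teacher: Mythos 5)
Your proof is correct and takes essentially the same approach as the paper: a union bound over the pair responsible for a congruence $a_i\equiv\pm a_j\pmod{n_1}$, driven by the fact that a fixed residue class modulo $n_1$ meets $\left[1,\tfrac{n-1}{2}\right]_{\Z}$ in at most $\tfrac{n}{2n_1}+1$ points. The only difference is bookkeeping: the paper sums over the $\binom{k}{2}$ pairs of coordinate positions, choosing the other $k-1$ entries freely (at most $\binom{(n-1)/2}{k-1}$ ways) and bounding the constrained entry $a_j$ by $4\left(\lfloor n/n_1\rfloor+1\right)$ choices, whereas you count bad value-pairs (at most $O(n^2/n_1)$ of them) and multiply by the $\binom{(n-1)/2-2}{k-2}$ completions; both give the same $C_k n^k/n_1$ bound with a constant that is polynomial (in your case even bounded) in $k$.
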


\begin{proof}
    Given a pair $\tau=(i,j)$ such that $i<j$, let $\mathcal{S}''_\tau(n, k, m)$ be the subset of $\mathcal{S}''(n, k, m)$ such that \[a_i\equiv \pm a_j\mod{n_1}.\] The total number of subsets $\{a_1, \dots,a_{j-1}, a_{j+1}, \dots, a_k\}$ is at most $\binom{\left(\frac{n-1}{2}\right)}{k-1}$. Given one such choice, \[\begin{split}a_j\in & \{a_i+r n_1\mid -\lfloor n/n_1\rfloor-1\leq r \leq \lfloor n/n_1\rfloor+1\} \\
    \cup & \{-a_i+r n_1\mid -\lfloor n/n_1\rfloor-1\leq r \leq \lfloor n/n_1\rfloor+1\}.\end{split}\] Therefore, we find that 
    \[\#\mathcal{S}''_\tau(n, k, m)\leq 4\left(\lfloor n/n_1\rfloor+1\right)\binom{\left(\frac{n-1}{2}\right)}{k-1}.\]
    Thus, we have that 
    \[\#\mathcal{S}''(n, k, m)\leq 4\binom{k}{2}\left(\lfloor n/n_1\rfloor+1\right)\binom{\left(\frac{n-1}{2}\right)}{k-1}<\frac{C_kn^k}{n_1}.\]
\end{proof}

Given $n\in \Z_{\geq 1}$, let $d(n)$ denote the number of divisors of $n$. Note that for any $\epsilon>0$, $d(n)=O(n^\epsilon)$. Given non-negative functions $f$, $g$ and $h$, we write \[f=g+O_k(h)\] if there is a constant $C_k>0$ which depends only on $k$ such that 
\[|f-g|\leq C_k h.\]
 
\begin{lemma}\label{bounding S'' lemma}
    There is a constant $C_k>0$, independent of $n$, and depending on $k$, such that 
    \[\#\mathcal{S}''(n, k)< C_k n^{k}\left(\sum_{d|n} \frac{\varphi(d)}{d}\right)<C_kd(n) n^k, \] where $d(n)$ is the number of divisors of $n$. As a result, we find that for any $\epsilon>0$,
    \[\#\mathcal{S}''(n, k)=O_k(n^{k+\epsilon}),\]
    and that
    \[\frac{\#\mathcal{S}''(n, k)}{\#\mathcal{S}(n, k)}=O_k(n^{-1+\epsilon}).\]
\end{lemma}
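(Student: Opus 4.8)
The plan is to sum the per-slice estimate of Lemma \ref{lemma 3.2} over all $m$ and to recognize the resulting arithmetic sum as a divisor sum. Since $\mathcal{S}''(n,k)$ is by definition the disjoint union of the slices $\mathcal{S}''(n,k,m)$ as $m$ ranges over $[0,n-1]_{\Z}$, I would write
\[\#\mathcal{S}''(n,k)=\sum_{m=0}^{n-1}\#\mathcal{S}''(n,k,m),\]
and then apply Lemma \ref{lemma 3.2} to each term. Recalling that $n_1=n/(m,n)$, so that $1/n_1=(m,n)/n$, this gives
\[\#\mathcal{S}''(n,k)<C_k n^k\sum_{m=0}^{n-1}\frac{1}{n_1}.\]
For $m=0$ one has $(0,n)=n$, hence $n_1=1$, and the corresponding term is the full slice $\mathcal{S}''(n,k,0)=\mathcal{S}(n,k,0)$, which is consistent with the bound $C_k n^k/n_1$.

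The key step is to evaluate $\sum_{m=0}^{n-1}1/n_1$ as a sum over divisors of $n$. I would group the integers $m\in[0,n-1]_{\Z}$ according to the value of $n_1=n/(m,n)$. For each divisor $e\mid n$, the condition $n_1=e$ is equivalent to $(m,n)=n/e$; writing $m=(n/e)m'$ with $0\le m'<e$, this holds precisely when $(m',e)=1$, so there are exactly $\varphi(e)$ such values of $m$. (As a consistency check, $\sum_{e\mid n}\varphi(e)=n$, matching the total count of $m$.) Summing over the divisors yields the identity
\[\sum_{m=0}^{n-1}\frac{1}{n_1}=\sum_{e\mid n}\frac{\varphi(e)}{e},\]
and hence the first displayed inequality of the lemma.

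For the second inequality I would use that $\varphi(e)/e\le 1$ for every $e$, with strict inequality whenever $e>1$, so that $\sum_{e\mid n}\varphi(e)/e<\sum_{e\mid n}1=d(n)$ for $n>1$. Combining this with the estimate $d(n)=O(n^{\epsilon})$ recorded just before the statement gives $\#\mathcal{S}''(n,k)=O_k(n^{k+\epsilon})$ for every $\epsilon>0$. Finally, since
\[\#\mathcal{S}(n,k)=n\binom{\tfrac{n-1}{2}}{k}\asymp_k n^{k+1},\]
dividing the two bounds produces
\[\frac{\#\mathcal{S}''(n,k)}{\#\mathcal{S}(n,k)}=O_k(n^{-1+\epsilon}).\]

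The only genuinely non-routine point is the divisor-sum rearrangement in the second paragraph; everything else is a direct substitution of Lemma \ref{lemma 3.2} followed by the elementary bounds $\varphi(e)/e\le 1$ and $d(n)=O(n^{\epsilon})$. I expect no difficulty with the $m=0$ term, and the concluding division is immediate once the asymptotic size $\asymp_k n^{k+1}$ of $\#\mathcal{S}(n,k)$ is noted.
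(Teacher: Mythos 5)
Your proposal is correct and follows essentially the same route as the paper: both sum the per-slice bound of Lemma \ref{lemma 3.2} over $m$ and recognize $\sum_m 1/n_1$ as the divisor sum $\sum_{e\mid n}\varphi(e)/e$ by counting the $\varphi(e)$ values of $m$ with a given gcd, then conclude via $d(n)=O(n^{\epsilon})$ and $\#\mathcal{S}(n,k)=n\binom{(n-1)/2}{k}\asymp_k n^{k+1}$. The only (cosmetic) difference is your treatment of $m=0$: the paper bounds that slice separately by $n^k$ and absorbs it by enlarging $C_k$ to $C_k+1$, whereas you fold it into the general bound after checking it holds there with $n_1=1$, which is equally valid provided $C_k\geq 1$.
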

\begin{proof}
\par Let $m\in [1, n-1]_{\Z}$, set $d=(m, n)$. Then, Lemma \ref{lemma 3.2} implies that \[\#\mathcal{S}''(n, k, m)<C_k\frac{n^k}{n_1}= C_k n^{k-1} d.\]
We find that 
    \[\begin{split}\#\mathcal{S}''(n, k) &= \#\mathcal{S}''(n, k, 0)+ \sum_{m=1}^{n-1} \#\mathcal{S}''(n, k, m),\\
    &=\#\mathcal{S}''(n, k, 0)+ \sum_{d|n} \sum_{(m, n)=d} \#\mathcal{S}''(n, k, m),\\
     &< n^k+ C_k \sum_{d|n} n^{k-1} d \#\{m\in [1, n-1]_{\Z}\mid (m, n)=d\},\\
     &= n^k+ C_k \sum_{d|n} n^{k-1} d \varphi\left(\frac{n}{d}\right),\\
     &= n^k+ C_k n^{k}\left(\sum_{d|n} \frac{\varphi(d)}{d}\right).\\
     \end{split}
\]
Replace $C_k$ above with $(C_k+1)$ to obtain the result. 
\end{proof}

\begin{remark}
The function $g(n):= \sum_{d|n} \varphi(d) \left(\frac{n}{d}\right)$ is known as  Pillai's arithmetical function \cite{Pillai:1933}. Set $\omega(n)$ to denote the number of prime divisors of $n$. We note that a bound of 
    \[g(n) \leq 27n \left(\frac{\log n}{\omega(n)} \right)^{\omega(n)}\]
    is explicitly proven by Broughan \cite[Theorem 3.1]{Broughan:2001}.
\end{remark}

\begin{lemma} \label{lem:interval}
Let $\sigma\in S_k$, $m\in [1, n-1]_{\Z}$, and $\vec{a}\in \mathcal{S}'(n, k, m)$. Let $I$ be an interval contained in $[-k,k]$. Then, the point $P(\vec{a}, \sigma, m, n)$ defined above in \eqref{def of P} lies in $\Omega_{n_1}'$. Furthermore, the following are equivalent
\begin{enumerate}
    \item $\tau_m(\vec{a})\in I$, 
    \item $P(\vec{a}, \sigma, m, n)\in \Omega_{n_1}'(I)$.
\end{enumerate}
\end{lemma}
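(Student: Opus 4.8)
The plan is to prove the two assertions in turn, observing first that the equivalence in part (2) is cheap once membership $P(\vec{a},\sigma,m,n)\in \Omega_{n_1}'$ is in hand: since $\Omega_{n_1}'(I)=B_k(I)\cap L_{n_1}'$ and $\Omega_{n_1}'=B_k\cap L_{n_1}'$, once we know $P(\vec{a},\sigma,m,n)\in L_{n_1}'$ the condition $P(\vec{a},\sigma,m,n)\in \Omega_{n_1}'(I)$ is simply the condition $P(\vec{a},\sigma,m,n)\in B_k(I)$. So the bulk of the work is to check $P(\vec{a},\sigma,m,n)\in \Omega_{n_1}'=B_k\cap L_{n_1}'$, which I would split into three verifications: each coordinate lies in $[0,1/2)$ (the box condition), each coordinate lies in $\tfrac{1}{n_1}\Z$ (membership in $L_{n_1}$), and the coordinates are mutually distinct (membership in $L_{n_1}'$). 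Throughout I will use $\tfrac{m a_{\sigma(i)}}{n}=\tfrac{m_1 a_{\sigma(i)}}{n_1}$ and the coprimality $(m_1,n_1)=1$, which follows from $d=(m,n)$, $m=dm_1$, $n=dn_1$; note also $n_1>1$ since $m\in[1,n-1]_{\Z}$, and $n_1\mid n$ is odd.

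The lattice condition is immediate: writing $m_1 a_{\sigma(i)}\in\Z$, the fractional part of $\tfrac{m_1 a_{\sigma(i)}}{n_1}$ is of the form $\tfrac{c}{n_1}$ with $c\in\{0,\dots,n_1-1\}$, and $\kappa$ sends $\tfrac{c}{n_1}$ to $\tfrac{c}{n_1}$ or $\tfrac{n_1-c}{n_1}$, both in $\tfrac{1}{n_1}\Z$. Since $\kappa$ takes values in $[0,1/2]$, the only thing to rule out for the box condition is the endpoint $\kappa=1/2$. This is where oddness is essential: $\kappa\!\left(\tfrac{m_1 a_{\sigma(i)}}{n_1}\right)=1/2$ would force $\tfrac{2m_1 a_{\sigma(i)}}{n_1}\in\Z$, hence $n_1\mid 2m_1 a_{\sigma(i)}$; because $n_1$ is odd we may drop the factor $2$, and because $(m_1,n_1)=1$ we may drop $m_1$, leaving $n_1\mid a_{\sigma(i)}$, which makes the fractional part equal to $0$ rather than $1/2$, a contradiction. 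Thus every coordinate lies in $[0,1/2)$ and $P(\vec{a},\sigma,m,n)\in \Omega_{n_1}$.

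For distinctness, suppose the $i$-th and $j$-th coordinates agree for some $i\neq j$. Using that $\kappa(x)=\kappa(y)$ is equivalent to $x\equiv \pm y \pmod 1$, equality gives $m_1 a_{\sigma(i)}\equiv \pm m_1 a_{\sigma(j)} \pmod{n_1}$, and cancelling $m_1$ (again via $(m_1,n_1)=1$) yields $a_{\sigma(i)}\equiv \pm a_{\sigma(j)} \pmod{n_1}$ with $\sigma(i)\neq\sigma(j)$. This contradicts the defining property of $\mathcal{S}'(n,k,m)$, so the coordinates are distinct and $P(\vec{a},\sigma,m,n)\in \Omega_{n_1}'$, proving the first assertion. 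For the equivalence, it now remains to identify $\sum_{i=1}^k \op{cos}\!\left(2\pi\,\kappa\!\left(\tfrac{m_1 a_{\sigma(i)}}{n_1}\right)\right)$ with $\tau_m(\vec{a})$: by the recorded identity $\op{cos}(2\pi\kappa(x))=\op{cos}(2\pi x)$ and $\tfrac{m_1 a_{\sigma(i)}}{n_1}=\tfrac{m a_{\sigma(i)}}{n}$, this sum equals $\sum_{i=1}^k \op{cos}\!\left(\tfrac{2\pi m a_{\sigma(i)}}{n}\right)$, which is exactly $\tau_m(\vec{a})$ since $\sigma$ only permutes the summands. Hence $P(\vec{a},\sigma,m,n)\in B_k(I)$ if and only if $\tau_m(\vec{a})\in I$, which combined with $P(\vec{a},\sigma,m,n)\in L_{n_1}'$ gives the stated equivalence.

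I expect the main obstacle, such as it is, to be the endpoint check $\kappa\neq 1/2$: it is the single place where the hypothesis that $n$ (hence $n_1$) is odd is genuinely used, and it is easy to miss because $B_k$ is the \emph{half-open} box. Everything else is a direct unwinding of definitions, with the coprimality $(m_1,n_1)=1$ powering both the endpoint argument and the distinctness argument.
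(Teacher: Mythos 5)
Your proof is correct and follows essentially the same route as the paper: distinctness of the coordinates by cancelling $m_1$ (coprime to $n_1$) against the defining congruence condition of $\mathcal{S}'(n,k,m)$, and the equivalence of (1) and (2) via the identity $\op{cos}(2\pi \kappa(x))=\op{cos}(2\pi x)$ together with the fact that $\sigma$ merely permutes the summands of $\tau_m(\vec{a})$. The only difference is your explicit endpoint check that no coordinate equals $1/2$ (using that $n_1$ is odd and $(m_1,n_1)=1$), a point the paper's proof glosses over when it asserts the coordinates lie in $[0,1/2]$ and directly concludes membership in $\Omega_{n_1}'$; since $B_k$ is the half-open box, your version is the more careful one.
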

\begin{proof}
Suppose that 
    \[\kappa\left(\frac{m_1a_{\sigma(i)}}{n_1}\right)=\kappa\left(\frac{m_1a_{\sigma(j)}}{n_1}\right),\] where we recall that $\kappa(x)$ is given by \eqref{eq:omega}. Then, 
    \[m_1 a_{\sigma(i)}\equiv \pm m_1 a_{\sigma(j)}\mod{n_1}.\] Since $m_1$ is coprime to $n_1$, we find that 
     \[a_{\sigma(i)}\equiv \pm  a_{\sigma(j)}\mod{n_1}.\]
     However, since $\vec{a}\in \mathcal{S}'(n, k, m)$,
     \[a_{\sigma(i)}\not\equiv \pm  a_{\sigma(j)}\mod{n_1}\] unless $i=j$. This proves that all the coordinates of the point $P(\vec{a}, \sigma, m, n)$ are distinct. Furthermore, all coordinates are of the form $b_i/n_1$, where $b_i$ is an integer, and these numbers lie in the range $[0, 1/2]$. Therefore, the point $P(\vec{a}, \sigma, m, n)$ is contained in $\Omega_{n_1}'$.

     \par Write $P(\vec{a}, \sigma, m, n)=(y_1, \dots, y_k)$, where, $y_i=\kappa\left(\frac{ma_{\sigma(i)}}{n}\right)$. Recall that $B_k(I)$ consists of all tuples $(x_1, \dots, x_k)\in B_k$ such that $\sum_{i=1}^k \op{cos}(2\pi x_i)\in I$. The point $(y_1, \dots, y_k)\in \Omega_{n_1}'$ lies in $\Omega_{n_1}'(I)$ if and only if $\sum_{i=1}^k \op{cos}(2\pi y_i)\in I$. Observe that 
     \[\begin{split}
     & \sum_{i=1}^k \op{cos}(2\pi y_i)\\
     = & \sum_{i=1}^k \op{cos}\left(2\pi \kappa\left(\frac{m_1a_{\sigma(i)}}{n_1}\right)\right) \\
      = & \sum_{i=1}^k \op{cos}\left( \frac{2\pi m_1a_{\sigma(i)}}{n_1}\right) \\
       = & \sum_{i=1}^k \op{cos}\left( \frac{2\pi m_1 a_{i}}{n_1}\right)=\tau_m(\vec{a}). \\
     \end{split}\]
     This means that the conditions are equivalent.
\end{proof}

\begin{lemma}\label{lemma 4.5}
    With respect to notation above,
    \[\begin{split} & \# \mathcal{S}'(n,k, m)=\left(\frac{(n,m)+1}{2}\right)^k \# \mathcal{S}'(n_1,k, m_1), \\ 
     & \# \mathcal{S}_I'(n,k, m)=\left(\frac{(n,m)+1}{2}\right)^k \# \mathcal{S}_I'(n_1,k, m_1). \\ \end{split}\]
\end{lemma}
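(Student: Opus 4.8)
The plan is to prove both identities at once by exhibiting a single \emph{reduction-modulo-$n_1$ map} $\pi\colon \mathcal{S}'(n,k,m)\to \mathcal{S}'(n_1,k,m_1)$ that is surjective and has all fibres of the same cardinality $\left(\frac{(n,m)+1}{2}\right)^k$. Recall $d=(n,m)$, $n=dn_1$, $m=dm_1$, and that $n$, hence $d$ and $n_1$, are odd. First I would record that the target is easy to describe: since $(m_1,n_1)=1$ the relevant modulus $(n_1)_1$ equals $n_1$, and for distinct $b,b'\in[1,(n_1-1)/2]$ one never has $b\equiv\pm b'\pmod{n_1}$; thus $\mathcal{S}'(n_1,k,m_1)=A_k(n_1)$, the plain set of increasing $k$-tuples in $[1,(n_1-1)/2]$. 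To define $\pi$, I send $\vec a=(a_1,\dots,a_k)$ to the tuple obtained by replacing each $a_i$ by the representative $b_i\in[1,(n_1-1)/2]$ of $\pm a_i$ modulo $n_1$ (the folding being governed by $\kappa$ as in \eqref{eq:omega}) and then sorting. The defining condition $a_i\not\equiv\pm a_j\pmod{n_1}$ for $i<j$ guarantees that the $b_i$ are pairwise distinct, so $\pi(\vec a)\in A_k(n_1)=\mathcal{S}'(n_1,k,m_1)$ is well defined.

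For the fibre count I would fix $\vec b\in\mathcal{S}'(n_1,k,m_1)$ and reconstruct its preimages coordinate by coordinate: a preimage is any tuple whose $i$-th reduced coordinate is $b_i$, that is, any choice of $a_i\in[1,(n-1)/2]$ with $a_i\equiv b_i\pmod{n_1}$ for each $i$, these choices being independent across $i$ and automatically producing distinct coordinates. The key computation is that for each fixed $b_i\in[1,(n_1-1)/2]$ the arithmetic progression $b_i,\,b_i+n_1,\,b_i+2n_1,\dots$ meets $[1,(n-1)/2]$ in exactly $\tfrac{d+1}{2}$ terms, namely those indexed by $t\in\{0,1,\dots,\tfrac{d-1}{2}\}$; the inequality $b_i\le(n_1-1)/2$ together with the oddness of $d$ and $n_1$ is precisely what pins this count down to the uniform value $\tfrac{d+1}{2}=\tfrac{(n,m)+1}{2}$, independent of $i$ and of $\vec b$. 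Multiplying over the $k$ coordinates gives $\#\pi^{-1}(\vec b)=\left(\tfrac{(n,m)+1}{2}\right)^k$, and summing over $\vec b\in\mathcal{S}'(n_1,k,m_1)$ yields the first identity.

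For the second identity I would check that $\pi$ respects the constraint cutting out the $I$-subsets. Indeed, if $\pi(\vec a)=\vec b$ then $a_i\equiv b_i\pmod{n_1}$, so using $m/n=m_1/n_1$ and the computation in Lemma \ref{lem:interval},
\[\tau_m(\vec a)=\sum_{i=1}^k\cos\!\left(\tfrac{2\pi m_1 a_i}{n_1}\right)=\sum_{i=1}^k\cos\!\left(\tfrac{2\pi m_1 b_i}{n_1}\right)=\tau_{m_1}(\vec b).\]
Hence $\vec a\in\mathcal{S}_I'(n,k,m)$ if and only if $\vec b\in\mathcal{S}_I'(n_1,k,m_1)$, so $\pi$ restricts to a surjection $\mathcal{S}_I'(n,k,m)\to\mathcal{S}_I'(n_1,k,m_1)$ whose fibres are exactly the fibres of $\pi$ found above. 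This gives the second identity with the same factor $\left(\tfrac{(n,m)+1}{2}\right)^k$.

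The main obstacle I anticipate is the exact fibre computation: obtaining the number of lifts as \emph{exactly} $\tfrac{(n,m)+1}{2}$, rather than $\tfrac{d-1}{2}$ or a value depending on $i$, requires the parity bookkeeping described above and genuinely uses both that the representatives $b_i$ lie in the lower half $[1,(n_1-1)/2]$ and that $n$ is odd. In the same vein one must verify in passing that no coordinate of an admissible $\vec a$ reduces to $0\pmod{n_1}$, so that $\pi$ really lands in $A_k(n_1)$ and every per-coordinate count is the uniform value $\tfrac{d+1}{2}$; this non-vanishing, and the resulting uniformity of the fibres, is the delicate point on which the clean multiplicative form of the statement rests.
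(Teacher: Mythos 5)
Your strategy---a folding map $\pi\colon \mathcal{S}'(n,k,m)\to\mathcal{S}'(n_1,k,m_1)$ with uniform fibres---is the paper's own construction read backwards (the paper instead lifts a tuple of $\mathcal{S}'(n_1,k,m_1)$ by adding multiples of $n_1$), but your execution fails at precisely the two points you flagged as delicate, and the failure cannot be repaired. The main error is the fibre count. Since $\pi$ folds by $\kappa$ of \eqref{eq:omega}, having $\pi(\vec a)=\vec b$ means $a_i\equiv \pm b_{\sigma(i)}\pmod{n_1}$, not $a_i\equiv b_{\sigma(i)}\pmod{n_1}$; your enumeration of preimages keeps only the plus sign. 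Writing $d=(n,m)$, the class of $+b_i$ meets $[1,(n-1)/2]$ in $(d+1)/2$ points, as you computed, but the class of $-b_i$ meets it in a further $(d-1)/2$ points, and every such choice again produces an element of $\mathcal{S}'(n,k,m)$ in the fibre over $\vec b$. So the fibres (over tuples avoiding the residue $0$) have cardinality $d^k$, not $\left(\frac{d+1}{2}\right)^k$: you have implicitly replaced folding by literal reduction, but literal reduction does not land in $[1,(n_1-1)/2]$, so you cannot have both the codomain and the fibre count you claim. Moreover, the deferred ``non-vanishing'' assertion is false: the definition of $\mathcal{S}'(n,k,m)$ constrains only pairs $i<j$, so a single coordinate divisible by $n_1$ is allowed. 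For instance, with $n=9$, $m=3$ (so $n_1=3$) and $k=1$, the tuple $(3)$ lies in $\mathcal{S}'(9,1,3)$, and $\pi$ is not even defined on it.

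These defects are fatal because carrying out your method correctly contradicts the identity being proved: accounting for both signs and for the possible zero residue, one gets
\[\#\mathcal{S}'(n,k,m)=d^k\binom{(n_1-1)/2}{k}+\frac{d-1}{2}\,d^{k-1}\binom{(n_1-1)/2}{k-1},\]
which is strictly larger than $\left(\frac{d+1}{2}\right)^k\#\mathcal{S}'(n_1,k,m_1)$ whenever $d>1$ (and $k\leq \frac{n_1-1}{2}$). Concretely, in the example above, $\#\mathcal{S}'(9,1,3)=\#A_1(9)=4$, while the right-hand side of the lemma is $2\cdot\#\mathcal{S}'(3,1,1)=2$; so under the paper's stated definition of $\mathcal{S}'$ the identity itself fails, and no proof of it can be correct. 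For comparison, the paper's own argument has the mirror-image defect: its lifting $(\vec a,\vec a')\mapsto (a_i+a_i'n_1)_i$ followed by sorting (with $n_1$ in place of the misprinted $m_1$) is injective but not surjective, since its image consists only of tuples all of whose coordinates reduce into the lower half $[1,(n_1-1)/2]$ modulo $n_1$. The statement becomes true either by adding that lower-half requirement to the definition of $\mathcal{S}'(n,k,m)$, or by keeping the paper's definition and replacing the factor $\left(\frac{(n,m)+1}{2}\right)^k$ by $(n,m)^k$ at the cost of an error term $O_k\left((n,m)n^{k-1}\right)$; the latter repair leaves Proposition \ref{prop 3.11} and everything downstream intact, because there the same factor multiplies both $\#\mathcal{S}'_I(n,k,m)$ and $\#\mathcal{S}'(n,k,m)$ and cancels from the ratio.
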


\begin{proof}
    Given a tuple $(a_1, \dots, a_k)\in \mathcal{S}'(n_1,k, m_1)$ and $(a_1', \dots, a_k')\in \left[0, \left(\frac{(n,m)-1}{2}\right)\right]^k$, set $(b_1, \dots, b_k)$ be given by $b_i:=a_i+a_i'm_1$. Note that since $a_i\in [1, \frac{m_1-1}{2}]$, \[b_i<\left(\frac{m_1-1}{2}\right)+\left(\frac{(n,m)-1}{2}\right)m_1=\frac{m-1}{2}.\] Let $(c_1, \dots, c_k)$ be the permutation of the coordinates of $(b_1, \dots, b_k)$ such that $1\leq c_1<c_2<\dots < c_k\leq \frac{m-1}{2}$. Note that $a_i\not\equiv \pm a_j\mod{m_1}$ for $i\neq j$, and therefore, $c_i\not\equiv \pm c_j\mod{m_1}$ for $i\neq j$. This implies that $(c_1, \dots, c_k)\in \mathcal{S}'(n,k, m)$. The association 
    \[\left((a_1, \dots, a_k), (a_1', \dots, a_k')\right)\mapsto (c_1, \dots, c_k)\] sets up a bijection
    \[\mathcal{S}'(n_1,k, m_1)\times  \left[0, \left(\frac{(n,m)-1}{2}\right)\right]^k\rightarrow \mathcal{S}'(n,k, m),\]
    which restricts to a bijection
    \[\mathcal{S}_I'(n_1,k, m_1)\times  \left[0, \left(\frac{(n,m)-1}{2}\right)\right]^k\rightarrow \mathcal{S}_I'(n,k, m).\]
    Therefore, we conclude that \[\begin{split} & \# \mathcal{S}'(n,k, m)=\left(\frac{(n,m)+1}{2}\right)^k \# \mathcal{S}'(n_1,k, m_1), \\ 
    & \# \mathcal{S}_I'(n,k, m)=\left(\frac{(n,m)+1}{2}\right)^k \# \mathcal{S}_I'(n_1,k, m_1).
    \end{split}\]
\end{proof}
\begin{proposition}\label{prop 4.6}
    For $m\in [1, n-1]_{\Z}$, and $I\subseteq [-k,k]$ be a subset. Then the following assertions hold.
    
    \begin{enumerate}
        \item There is a surjection
    \[\Phi: \Omega_{n_1}'\rightarrow \mathcal{S}'(n_1,k,m_1)\]whose fibres all have cardinality $k!$. 
    \item The map $\Phi$ restricts to a map
    \[\Phi_{I}: \Omega_{n_1}'(I)\rightarrow \mathcal{S}_I'(n_1,k,m_1), \]whose fibers also have cardinality $k!$.
    \end{enumerate}
\end{proposition}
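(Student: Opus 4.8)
The plan is to realize $\Phi$ as the inverse of the explicit parametrization $P(\cdot,\cdot,m_1,n_1)$ studied in Lemma~\ref{lem:interval}, with the symmetric group $S_k$ accounting for the $k!$-element fibres. Since $d=(m,n)$ gives $(m_1,n_1)=1$, the assignment $\Psi(\vec a,\sigma):=P(\vec a,\sigma,m_1,n_1)$ sends $\mathcal{S}'(n_1,k,m_1)\times S_k$ into $\Omega_{n_1}'$ by Lemma~\ref{lem:interval}, and $\Phi$ will be the composite of $\Psi^{-1}$ with the projection onto the first factor. Concretely, given a point $y=(b_1/n_1,\dots,b_k/n_1)\in\Omega_{n_1}'$ with distinct integers $b_i$, I twist each coordinate by $m_1^{-1}\bmod n_1$, fold the result into $\{0,\dots,(n_1-1)/2\}$ using $\kappa$ from \eqref{eq:omega}, and sort the resulting values increasingly; this produces the candidate element $\Phi(y)$.

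First I would check that $\Phi$ is well defined, i.e.\ that the sorted tuple lies in $A_k(n_1)$, which coincides with $\mathcal{S}'(n_1,k,m_1)$ because the congruences $a_i\not\equiv\pm a_j\pmod{n_1}$ defining the latter are automatic for $1\le a_i<a_j\le(n_1-1)/2$. Multiplication by the unit $m_1^{-1}$ is a bijection of $\Z/n_1\Z$, so it preserves distinctness of the $b_i$ modulo $n_1$; the $\kappa$-fold can only identify two values when one is $\equiv$ the negative of the other modulo $n_1$, which is impossible for distinct residues in $\{0,\dots,(n_1-1)/2\}$ since their sum lies strictly between $0$ and $n_1$. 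Hence the $k$ folded values are distinct and sorting yields a strictly increasing tuple. Next I would establish that $\Psi$ is a bijection onto its image: injectivity follows since equal images share the same coordinate multiset, forcing equal sorted $\vec a$ and then equal $\sigma$ by distinctness, while each fibre is witnessed by $P(\vec a,\mathrm{id},m_1,n_1)$. Consequently $\Phi^{-1}(\vec a)=\{P(\vec a,\sigma,m_1,n_1):\sigma\in S_k\}$ has exactly $k!$ elements, as distinct $\sigma$ reorder the distinct coordinates differently. For part (2), the restriction $\Phi_I$ is immediate: the defining condition $\sum_i\cos(2\pi y_i)\in I$ of $\Omega_{n_1}'(I)$ equals $\tau_{m_1}(\vec a)\in I$ by the computation at the close of Lemma~\ref{lem:interval}, and this depends only on the unordered coordinate set, hence is constant along each $S_k$-fibre; thus $\Phi$ carries $\Omega_{n_1}'(I)$ onto $\mathcal{S}_I'(n_1,k,m_1)$ with fibres of size $k!$.

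The main obstacle is the family of boundary lattice points of $\Omega_{n_1}'$ having a vanishing coordinate $b_i=0$. Because both the twist by $m_1^{-1}$ and the fold $\kappa$ fix $0$, such a point would correspond to a tuple with an entry $\equiv 0\pmod{n_1}$, which is excluded from $A_k(n_1)$; equivalently, $P(\vec a,\sigma,m_1,n_1)$ never has a zero coordinate for $\vec a\in\mathcal{S}'(n_1,k,m_1)$, so these points are not hit by $\Psi$. This is precisely the point requiring care, and I would resolve it by recording that $B_k$ is read as open along the facets $x_i=0$ — equivalently, the coordinates of points of $\Omega_{n_1}'$ are taken strictly positive — a convention that affects neither $\op{Vol}(B_k)=2^{-k}$ nor any of the leading-order lattice-point counts invoked later, and which promotes $\Psi$ to a genuine bijection $\mathcal{S}'(n_1,k,m_1)\times S_k\xrightarrow{\sim}\Omega_{n_1}'$, so that the asserted $k!$-to-one surjections hold on all of $\Omega_{n_1}'$.
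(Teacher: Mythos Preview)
Your construction coincides with the paper's: both invert $P(\cdot,\cdot,m_1,n_1)$ by multiplying each coordinate by $m_1^{-1}\bmod n_1$, folding into $\{0,\dots,(n_1-1)/2\}$, and sorting, then use the free $S_k$-action by coordinate permutation for the $k!$-fibres and Lemma~\ref{lem:interval} for part~(2). You go a step beyond the paper in flagging the points of $\Omega_{n_1}'$ with a zero coordinate, which indeed untwist to tuples with an entry $0\notin A_k(n_1)$ and hence lie outside the image of $\Psi$; the paper's proof passes over this silently, so as literally stated the $k!$-to-$1$ surjection misses these $O_k(n_1^{k-1})$ points, but the discrepancy is swallowed by the error term in Corollary~\ref{size of S' lemma} and your convention of reading $B_k$ as open along $x_i=0$ is a clean and harmless repair.
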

\begin{proof}
    We set $d:=(m,n)$, $m_1:=m/d$ and $n_1:=n/d$. Consider a point $x=(x_1, \dots, x_k)\in \Omega_{n_1}'$. By definition, there exist distinct integers $b_i\in [0, \frac{n_1-1}{2}]$ such that $x_i=b_i/n_1$. Let $c_i\in [0, n_1-1]$ be such that $m_1c_i\equiv b_i\mod{n_1}$ for all $i$. Then, set 
    \[a_i'=\begin{cases} c_i &\text{ if } c_i\in [0, \frac{n_1-1}{2}] \\
    (n_1-c_i) &\text{ if } c_i\in [ \frac{n_1+1}{2}, n_1-1] .
    \end{cases}\]
    Note that $a_i'\in [0, \frac{n_1-1}{2}]$ and that $\kappa\left(\frac{m_1a_i'}{n_1}\right)=\frac{m_1c_i}{n_1}=\frac{b_i}{n_1}$. There is a tuple $\vec{a}=(a_1, \dots, a_k)\in \mathcal{S}'(n_1, k, m_1)$ and a permutation $\sigma\in S_k$ such that $(a_{\sigma(1)}, \dots, a_{\sigma(k)})=(a_1', \dots, a_k')$. Thus, there is a unique permutation $\sigma$ such that 
    \[(x_1, \dots, x_k)=\left(\kappa\left(\frac{m_1a_{\sigma(1)}}{n_1}\right), \dots, \kappa\left(\frac{m_1 a_{\sigma(i)}}{n_1}\right), \dots, \kappa\left(\frac{m_1 a_{\sigma(k)}}{n_1}\right)\right).\] We map the tuple $(x_1, \dots, x_k)$ to $(a_1, \dots, a_k)$. Consider the action of $S_k$ on $\Omega_{n_1}'$ via permutation of coordinates. The fibres of this map have cardinality $k!=\# S_k$. By Lemma \ref{lem:interval}, it is easy to see that  $\Phi^{-1}(\mathcal{S}_I'(n_1, k, m_1))=\Omega_{n_1}'(I)$, and the second assertion follows. 
\end{proof}

\begin{lemma}\label{lemma 3.7}
Let $m\in[1, n-1]_{\Z}$ and $I$ be an interval contained in $[-k, k]$. Then we have that 
\[\begin{split} & \# \mathcal{S}'(n, k, m)=\left(\frac{(n,m)+1}{2}\right)^k\frac{\#\Omega_{n_1}'}{k!},\\ 
& \# \mathcal{S}'_I(n, k, m)=\left(\frac{(n,m)+1}{2}\right)^k\frac{\#\Omega_{n_1}'(I)}{k!}. \end{split}\]
\end{lemma}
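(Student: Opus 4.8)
The plan is to deduce the two formulas purely by combining the structural results already in hand, namely Lemma~\ref{lemma 4.5} and Proposition~\ref{prop 4.6}; no new estimates are needed, so this statement is essentially a bookkeeping consequence of the work done so far.

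First I would extract a clean cardinality identity from Proposition~\ref{prop 4.6}. Writing $d=(n,m)$, $m_1=m/d$ and $n_1=n/d$, observe that $(m_1,n_1)=1$, so the proposition applies to the reduced pair. Part (1) furnishes a surjection $\Phi\colon \Omega_{n_1}'\to \mathcal{S}'(n_1,k,m_1)$ all of whose fibres have cardinality $k!$. Counting the domain fibre by fibre therefore gives
\[
\#\Omega_{n_1}'=\sum_{y\in \mathcal{S}'(n_1,k,m_1)} \#\Phi^{-1}(y)=k!\cdot \#\mathcal{S}'(n_1,k,m_1),
\]
so that $\#\mathcal{S}'(n_1,k,m_1)=\#\Omega_{n_1}'/k!$. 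Part (2) gives the restricted surjection $\Phi_I\colon \Omega_{n_1}'(I)\to \mathcal{S}_I'(n_1,k,m_1)$, again with every fibre of size $k!$, and the identical fibre count yields $\#\mathcal{S}_I'(n_1,k,m_1)=\#\Omega_{n_1}'(I)/k!$.

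With these two identities established, I would substitute them into the two equalities of Lemma~\ref{lemma 4.5}. The first equality $\#\mathcal{S}'(n,k,m)=\left(\frac{(n,m)+1}{2}\right)^k \#\mathcal{S}'(n_1,k,m_1)$ becomes $\left(\frac{(n,m)+1}{2}\right)^k \#\Omega_{n_1}'/k!$, and the interval version similarly gives $\left(\frac{(n,m)+1}{2}\right)^k \#\Omega_{n_1}'(I)/k!$. These are precisely the two formulas asserted in the lemma, completing the argument.

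As for the main difficulty, there is essentially none at this stage: all of the genuine content has already been carried out, namely the bijection reducing $(n,k,m)$ to the reduced pair $(n_1,k,m_1)$ in Lemma~\ref{lemma 4.5}, and the $k!$-to-one parametrization of $\Omega_{n_1}'$ by ordered tuples in Proposition~\ref{prop 4.6}. The only point that warrants a moment's care is to check that Proposition~\ref{prop 4.6} is invoked for the coprime pair $(n_1,m_1)$ rather than for $(n,m)$, and that the interval-restricted fibre count is legitimate, which follows from the compatibility of $\Phi$ with membership in $I$ recorded in Lemma~\ref{lem:interval}.
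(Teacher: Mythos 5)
Your proposal is correct and follows essentially the same route as the paper: both combine the reduction of Lemma~\ref{lemma 4.5} with the $k!$-to-one surjection of Proposition~\ref{prop 4.6} and simply substitute one identity into the other. If anything, your version is slightly more careful than the paper's own write-up, which contains a small typo ($\Omega_{n_1}$ where $\Omega_{n_1}'$ is meant) that your fibre-counting step states correctly.
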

\begin{proof}
   Lemma \ref{lemma 4.5} asserts that the following relations hold  \[\begin{split} & \# \mathcal{S}'(n,k, m)=\left(\frac{(n,m)+1}{2}\right)^k \# \mathcal{S}'(n_1,k, m_1), \\ 
     & \# \mathcal{S}_I'(n,k, m)=\left(\frac{(n,m)+1}{2}\right)^k \# \mathcal{S}_I'(n_1,k, m_1). \\ \end{split}\]

    By Proposition \ref{prop 4.6}, we have that
    \[\begin{split}
    &\# \mathcal{S}'(n_1,k, m_1)=\frac{\# \Omega_{n_1}}{k!},\\
    &\# \mathcal{S}_I'(n_1,k, m_1)=\frac{\# \Omega_{n_1}(I)}{k!},
    \end{split}\]
    and combining the above relations, the result follows.
\end{proof}

\begin{corollary}\label{size of S' lemma}
    Let $m\in [1, n-1]$, we have that 
    \[\begin{split} & \# \mathcal{S}'(n, k, m)=\left(\frac{(n,m)+1}{2}\right)^k\frac{\#\Omega_{n_1}}{k!}+O_k((n,m)n^{k-1}),\\ 
& \# \mathcal{S}'_I(n, k, m)=\left(\frac{(n,m)+1}{2}\right)^k\frac{\#\Omega_{n_1}(I)}{k!}+O_k((n,m)n^{k-1}). \end{split}\]
\end{corollary}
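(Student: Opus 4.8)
The plan is to derive this directly from the exact identities of Lemma~\ref{lemma 3.7}, trading the primed sets $\Omega_{n_1}'$ and $\Omega_{n_1}'(I)$ for their unprimed counterparts $\Omega_{n_1}$ and $\Omega_{n_1}(I)$ and absorbing the discrepancy into the error term. By definition $\Omega_{n_1}'\subseteq\Omega_{n_1}$, and the difference $\Omega_{n_1}\setminus\Omega_{n_1}'$ consists precisely of the lattice points of $B_k\cap L_{n_1}$ at least two of whose coordinates coincide; the same description applies to $\Omega_{n_1}(I)\setminus\Omega_{n_1}'(I)$. So the first step is to bound the number of lattice points in the box having a repeated coordinate.

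For this, recall that a point of $\Omega_{n_1}$ has each coordinate of the form $a_i/n_1$ with $a_i\in\{0,1,\dots,(n_1-1)/2\}$ (here $n_1=n/(n,m)$ is odd, being a divisor of the odd number $n$), so that $\#\Omega_{n_1}=\left(\frac{n_1+1}{2}\right)^k$. Fixing a pair $(i,j)$ with $i<j$ and forcing $a_i=a_j$ leaves at most $\left(\frac{n_1+1}{2}\right)^{k-1}$ points; summing over the $\binom{k}{2}$ pairs and applying a union bound gives
\[\#\Omega_{n_1}-\#\Omega_{n_1}'\;\leq\;\binom{k}{2}\left(\frac{n_1+1}{2}\right)^{k-1}=O_k\!\left(n_1^{k-1}\right),\]
and the identical bound controls $\#\Omega_{n_1}(I)-\#\Omega_{n_1}'(I)$, since that difference is a subset of $\Omega_{n_1}\setminus\Omega_{n_1}'$.

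The final step is bookkeeping on the prefactor. Substituting $\#\Omega_{n_1}'=\#\Omega_{n_1}+O_k(n_1^{k-1})$ (and the interval analogue) into Lemma~\ref{lemma 3.7} and multiplying through by $\left(\frac{(n,m)+1}{2}\right)^k/k!$ produces an error of size $\left(\frac{(n,m)+1}{2}\right)^k\cdot O_k(n_1^{k-1})=O_k\!\left((n,m)^k\,n_1^{k-1}\right)$, where I have used $\frac{(n,m)+1}{2}\leq (n,m)$. Writing $d:=(n,m)$ and $n=d\,n_1$, the identity $d^k n_1^{k-1}=d\,(d n_1)^{k-1}=d\,n^{k-1}$ converts this into $O_k((n,m)\,n^{k-1})$, which is exactly the claimed error, and the same computation applies verbatim to the interval version.

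I do not anticipate a genuine obstacle: the mathematical content is simply a union-bound count of the lattice points lying on the diagonals $\{x_i=x_j\}$, and everything else is algebra. The one place requiring care is the conversion $d^k n_1^{k-1}=d\,n^{k-1}$, which is what makes the apparently $d^k$-sized prefactor collapse to a single factor of $(n,m)$; tracking those exponents correctly is the only step where an error could plausibly creep in.
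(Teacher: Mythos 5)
Your proposal is correct and follows essentially the same route as the paper: starting from Lemma~\ref{lemma 3.7}, bounding the lattice points of $\Omega_{n_1}\setminus\Omega_{n_1}'$ (those with a repeated coordinate) by a union bound of size $O_k(n_1^{k-1})$, and collapsing the prefactor via $d^k n_1^{k-1}=d\,n^{k-1}$ with $d=(n,m)$. The only difference is cosmetic: your per-pair count $\left(\frac{n_1+1}{2}\right)^{k-1}$ is in fact slightly more careful than the paper's $\left(\frac{n_1-1}{2}\right)^{k-1}$, and both are immaterial to the stated error term.
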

\begin{proof}
    Let $\Omega_{n_1}'':=\Omega_{n_1}\backslash \Omega_{n_1}'$ and $\Omega_{n_1}''(I):=\Omega_{n_1}(I)\backslash \Omega_{n_1}'(I)$. The set $\Omega_{n_1}''$ consists of all tuples $\left(x_1, \dots, x_k\right)$ such that for some $i<j$, $x_i=x_j$. Clearly, the cardinality of this set can be bounded as follows
\[\# \Omega_{n_1}''\leq \binom{k}{2}\left(\frac{n_1-1}{2}\right)^{k-1}.\]

It follows from Lemma \ref{lemma 3.7} that 
\[\begin{split} & \# \mathcal{S}'(n, k, m)=\left(\frac{(n,m)+1}{2}\right)^k\frac{\#\Omega_{n_1}}{k!}-\left(\frac{(n,m)+1}{2}\right)^k\frac{\#\Omega_{n_1}''}{k!},\\ 
& \# \mathcal{S}'_I(n, k, m)=\left(\frac{(n,m)+1}{2}\right)^k\frac{\#\Omega_{n_1}(I)}{k!}-\left(\frac{(n,m)+1}{2}\right)^k\frac{\#\Omega_{n_1}''(I)}{k!}. \end{split}\]

Therefore, the error term can be bounded as follows 
\[\left(\frac{(n,m)+1}{2}\right)^k\frac{\# \Omega_{n_1}''}{k!}\leq \frac{1}{k!}\binom{k}{2}\left(\frac{n_1-1}{2}\right)^{k-1}\left(\frac{(n,m)+1}{2}\right)^k=O_k((n,m)n^{k-1}),\] and the result follows.
\end{proof}

\section{Proof of the main theorem}

In the previous section, we obtained a geometric interpretation for the quantities $\#\mathcal{S}'(n, k, m)$ and $\#\mathcal{S}'_I(n, k, m)$, in terms of the quantities $\#\Omega_{n_1}'$ and $\#\Omega_{n_1}'(I)$ respectively. In order to effectively bound $\#\Omega_{n}'(I)$, it suffices bound $\#\Omega_{n}(I)$.

\begin{lemma}\label{shift lemma}
    Suppose that $I=[c,d]$ is an interval contained in $[-k, k]$, and suppose that $(\frac{a_1}{n}, \dots, \frac{a_k}{n})\in B_k(I)$. Then, $(\frac{a_1+1}{n}, \frac{a_2+1}{n}, \dots, \frac{a_k+1}{n})\in B_k\left(\left[c-\frac{2\pi k}{n},d+\frac{2\pi k}{n}\right]\right)$.
\end{lemma}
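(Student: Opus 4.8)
The plan is to reduce the statement to a Lipschitz estimate for the map $x \mapsto \op{cos}(2\pi x)$, applied coordinatewise. First I would unwind the hypothesis: by the definition \eqref{def of B_k(I)}, the assumption $\left(\frac{a_1}{n}, \dots, \frac{a_k}{n}\right) \in B_k(I)$ means precisely that $0 \le \frac{a_i}{n} < \tfrac12$ for every $i$ and that $\sum_{i=1}^k \op{cos}\left(\frac{2\pi a_i}{n}\right) \in [c,d]$. To prove membership of the shifted tuple in $B_k\!\left(\left[c-\tfrac{2\pi k}{n}, d+\tfrac{2\pi k}{n}\right]\right)$, I must verify two things: that the shifted point still lies in the box $B_k$, and that its cosine-sum lies in the enlarged interval.

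The analytic heart is the second point, and I would handle it with the mean value theorem. Since $\frac{d}{dx}\op{cos}(2\pi x) = -2\pi \op{sin}(2\pi x)$ has absolute value at most $2\pi$, for each $i$ we get $\left|\op{cos}\!\left(\frac{2\pi(a_i+1)}{n}\right) - \op{cos}\!\left(\frac{2\pi a_i}{n}\right)\right| \le 2\pi \cdot \frac1n$, because the two arguments differ by $\frac{1}{n}$. Summing over $i = 1, \dots, k$ and applying the triangle inequality yields $\left|\sum_{i=1}^k \op{cos}\!\left(\frac{2\pi(a_i+1)}{n}\right) - \sum_{i=1}^k \op{cos}\!\left(\frac{2\pi a_i}{n}\right)\right| \le \frac{2\pi k}{n}$. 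Since the original sum lies in $[c,d]$, the shifted sum lies in $\left[c - \frac{2\pi k}{n}, d + \frac{2\pi k}{n}\right]$, which is exactly what is needed.

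For box membership I would argue as follows. Because $n$ is odd and $\frac{a_i}{n} < \tfrac12$, we have $a_i \le \frac{n-1}{2}$, so $\frac{a_i+1}{n} \le \frac{n+1}{2n}$; this exceeds $\tfrac12$ only in the single boundary case $a_i = \frac{n-1}{2}$. I would dispose of that case using the reflection identity $\op{cos}(2\pi x) = \op{cos}\!\left(2\pi \kappa(x)\right)$ recorded just after \eqref{eq:omega}: replacing the offending coordinate $\frac{a_i+1}{n}$ by $\kappa\!\left(\frac{a_i+1}{n}\right) \in [0,\tfrac12)$ returns it to the box without altering any cosine value, and hence without changing the cosine-sum estimated above. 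Equivalently, one may simply read $B_k$ as the closed box in this comparison, since only the cosine-sum matters for the subsequent lattice-point counts.

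I do not expect a serious obstacle here: the only delicate point is this boundary case for box membership, which the reflection $\kappa$ resolves cleanly. The estimate itself is a one-line Lipschitz bound with constant $2\pi = \max_x \left|\frac{d}{dx}\op{cos}(2\pi x)\right|$, and this maximal slope is precisely the source of the factor $2\pi k$ appearing in the enlarged interval.
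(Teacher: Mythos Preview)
Your argument is correct and uses the same idea as the paper: both proofs apply the mean value theorem to bound $\left|\cos\!\left(\frac{2\pi(a_i+1)}{n}\right)-\cos\!\left(\frac{2\pi a_i}{n}\right)\right|\le \frac{2\pi}{n}$ and then sum over the $k$ coordinates. The paper simply declares ``it suffices'' to verify the cosine-sum condition and ignores the box-membership boundary case you raise; your handling of that case via the reflection $\kappa$ is a harmless refinement, not a different method.
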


\begin{proof}
    It suffices to prove that \[\sum_{i=1}^k \cos\left(\frac{2\pi a_i}{n}\right) \in [c,d]\Longrightarrow \sum_{i=1}^k \cos\left(\frac{2\pi (a_i+1)}{n}\right) \in \left[c-\frac{2\pi k}{n},d+\frac{2\pi k}{n}\right].\]
    To see the above, notice that for $0\leq x\leq \pi$ and $\epsilon>0$ such that $x+\epsilon\leq \pi$, it follows from the mean value theorem that 
    \[\op{cos}(x)-\epsilon\leq \cos(x+\epsilon)\leq \op{cos}(x)+\epsilon. \]
\end{proof}

\begin{proposition}\label{size of Omega_n I prop}
    Let $I=[c,d]$ be an interval contained in $[-k,k]$. With respect to notation above, 
    \[\# \Omega_n(I)= n^k \op{Vol}(B_k(I))+O_k(n^{k-1}).\]
\end{proposition}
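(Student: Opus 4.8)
The plan is to recognize $\#\Omega_n(I)$ as a lattice-point count in a dilated region and then apply the classical Lipschitz principle of Davenport, after reducing its hypotheses to a single monotonicity fact. Setting $R_n:=n\cdot B_k(I)=\{(y_1,\dots,y_k)\in[0,n/2)^k : \sum_{i=1}^k\cos(2\pi y_i/n)\in I\}$, one has $\#\Omega_n(I)=\#(R_n\cap\Z^k)$ and $\op{Vol}(R_n)=n^k\op{Vol}(B_k(I))$, so the proposition is equivalent to the estimate $\#(R_n\cap\Z^k)=\op{Vol}(R_n)+O_k(n^{k-1})$.

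First I would record the structural observation that drives everything: since $t\mapsto\cos(2\pi t)$ is strictly monotone on $[0,1/2]$, the set $B_k(I)$ meets every line parallel to a coordinate axis in at most one interval. Indeed, fixing all coordinates but $x_j$, the defining constraint reads $\cos(2\pi x_j)\in[c-\sigma,\,d-\sigma]$ for a constant $\sigma$, and monotonicity forces its solution set to be a single subinterval of $[0,1/2)$. The same holds for every projection of $B_k(I)$ onto a coordinate subspace: such a projection is again of the form $\{\sum_l\cos(2\pi x_{i_l})\in I'\}$, where $I'$ is $I$ widened by the range $(-(k-j),\,k-j]$ of the suppressed cosines, so the one-interval property survives. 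Since dilation by $n$ preserves it, $R_n$ and all of its coordinate projections meet axis-parallel lines in at most one interval.

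With this verified I would invoke Davenport's lemma: if a bounded region in $\mathbb{R}^k$ meets every axis-parallel line — and every axis-parallel line in each of its coordinate projections — in at most $h$ intervals, then its integer-point count differs from its volume by at most $\sum_{j=0}^{k-1}h^{k-j}V_j$, where $V_j$ is the total $j$-dimensional volume of its coordinate projections. Here $h=1$, and each $j$-dimensional projection of $R_n$ lies in $[0,n/2)^j$, hence has volume at most $(n/2)^j$; therefore $\sum_{j=0}^{k-1}V_j=O_k(n^{k-1})$, and the error term follows with a constant depending only on $k$.

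The hard part will be the error bound, and specifically resisting a seductive but insufficient shortcut. One is tempted to sandwich $\#\Omega_n(I)$ between $n^k\op{Vol}(B_k([c+2\pi k/n,\,d-2\pi k/n]))$ and $n^k\op{Vol}(B_k([c-2\pi k/n,\,d+2\pi k/n]))$, using the per-cube oscillation bound of Lemma \ref{shift lemma} (that $\sum_i\cos(2\pi x_i)$ varies by at most $2\pi k/n$ across a cube of side $1/n$), and then to bound the intervening "slab" volumes. This does \emph{not} deliver $O_k(n^{k-1})$ when $c$ or $d$ is close to a value whose level set $\{\sum_i\cos(2\pi x_i)=c\}$ passes near a corner of the box, where $\nabla(\sum_i\cos(2\pi x_i))$ degenerates: there the density $\delta^{(k)}$ is unbounded (for instance at $\pm1$ when $k=1$, or at $0$ when $k=2$) and the slab volume can be as large as $O(n^{-1/2})$, giving an error far worse than $O(n^{-1})$ after scaling. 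The virtue of the Lipschitz principle is that it requires no estimate on the area of the level sets or on the density, relying only on the one-interval property that monotonicity of the cosine supplies unconditionally. A self-contained substitute for the appeal to Davenport can be assembled by counting, column by column over the first $k-1$ coordinates, the grid cubes met by the level sets $\{\sum_i\cos(2\pi x_i)=c\}$ and $\{\sum_i\cos(2\pi x_i)=d\}$, together with the $O_k(n^{k-1})$ cubes meeting the faces of $B_k$; there Lemma \ref{shift lemma} furnishes the oscillation bound while monotonicity again confines each column's contribution to a single block of indices.
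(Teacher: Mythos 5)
Your proof is correct, but it takes a genuinely different route from the paper's. The paper proves this proposition by precisely the ``sandwich'' you flag as the seductive shortcut: it covers each point of $\Omega_n(I)$ by a cube of side $1/n$, uses Lemma \ref{shift lemma} to place the union of these cubes inside $B_k\left(\left[c-\tfrac{2\pi k}{n}, d+\tfrac{2\pi k}{n}\right]\right)$, and then bounds the volumes of the two slabs $B_k\left(\left[c-\tfrac{2\pi k}{n},c\right]\right)$ and $B_k\left(\left[d,d+\tfrac{2\pi k}{n}\right]\right)$ via Lemma \ref{lem:vol-computation} by $\tfrac{2\pi k}{n}\cdot M$, where $M=\sup\{\delta^{(k)}(u)\mid u\in[c',c]\}$ for some fixed $c'<c$ (the case $c=-k$ being discarded because the slab then lies outside the support of $\delta^{(k)}$). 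Your Davenport argument replaces all of this by the one-interval-per-axis-parallel-line property coming from monotonicity of $\cos(2\pi t)$ on $[0,1/2]$, verified also for the coordinate projections (the hypothesis that is easiest to forget, and which you check correctly), giving $h=1$ and $\sum_{j<k} V_j=O_k(n^{k-1})$. The comparison cuts in your favour on one substantive point: the paper's constant is really $O_{k,I}$ rather than $O_k$, since $M$ depends on the endpoints of $I$; and when an endpoint of $I$ sits at a singularity of $\delta^{(k)}$ the paper's argument breaks down as written --- e.g.\ for $k=2$ and $c=0$ (or $d=0$), where $\delta^{(2)}$ has a logarithmic singularity at $0$, one has $M=\infty$ and the honest slab bound is of order $\log n/n$; for $k=1$ with endpoints near $\pm1$ the slab volume is of order $n^{-1/2}$, exactly as you predict. (These weaker bounds would still suffice for the $n^{-1+\epsilon}$ error term of Theorem \ref{main theorem 5.5}, so the paper's final results survive, but your proof is what actually delivers the uniform-in-$I$ bound $O_k(n^{k-1})$ that the proposition asserts.) What the paper's route buys is self-containedness and a direct link to $\delta^{(k)}$, which is reused immediately afterwards; your route buys uniformity and needs no information about the density at all, at the cost of citing Davenport's lemma (or fleshing out your column-counting substitute) and of one small housekeeping remark: Davenport's principle is stated for closed bounded regions, which is harmless here since $n$ is odd, so passing to the closure of $R_n$ adds no integer points.
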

\begin{proof}
For a given element $(\frac{a_1}{n}, \dots, \frac{a_n}{n})\in \Omega_{n}(I)$, let $\mathcal{B}_{(a_1,\dots,a_k)}$ denote the box
\[\mathcal{B}_{(a_1,\dots,a_k)}= \left\{(x_1,\dots,x_k)\left|\frac{a_j}{n}\leq x_j \leq \frac{a_j+1}{n}\right.\right\}.\] It is clear that $\op{Vol}(\mathcal{B}_{(a_1,\dots,a_k)})=\frac{1}{n^k}$. Now we consider the set $U_{k,n}$ which is the union of  $\mathcal{B}_{(a_1,\dots,a_k)}$, where $(\frac{a_1}{n}, \dots, \frac{a_n}{n})\in \Omega_{n}(I)$. This is a set containing $B_k(I)$. We have 
\[\op{Vol} (B_k(I)) \leq \frac{\# \Omega_{n}(I)}{n^k} =\op{Vol} (U_{k,n}).\]
From Lemma \ref{shift lemma}, it follows that
\[U_{k,n}\subseteq B_k\left(\left[c-\frac{2\pi k}{n},d+\frac{2\pi k}{n}\right]\right).\]

Thus we conclude that 
\[\begin{split}n^k\op{Vol} (B_k(I)) & \leq \# \Omega_{n}(I) \\ 
& \leq n^k\op{Vol} \left(B_k\left(\left[c-\frac{2\pi k}{n},d+\frac{2\pi k}{n}\right]\right)\right)  \\ 
& = n^k\op{Vol} (B_k(I)) \\ + & n^k\op{Vol} \left(B_k\left(\left[c-\frac{2\pi k}{n},c\right]\right)\right)+n^k\op{Vol} \left(B_k\left(\left[d,d+\frac{2\pi k}{n}\right]\right)\right).\end{split}.\]
To estimate the error term, it suffices to bound 
\[\left(\frac{n}{2}\right)^k \int^c_{c-\frac{2\pi k}{n}} \delta^{(k)}(u) du \text{ and }\left(\frac{n}{2}\right)^k \int^{d+\frac{2\pi k}{n}}_{d} \delta^{(k)}(u) du.\]

Note that the support of $\delta$ is contained in $[-1, 1]$, and hence the support of $\delta^{(k)}$ is contained in $[-k, k]$. Therefore, if $c=-k$, then, $\int^c_{c-\frac{2\pi k}{n}} \delta^{(k)}(u) du=0$. Without loss of generality, assume that $c\in (-k, k)$, and let $c'\in (-k, c)$. Set $M:=\op{sup}\{|\delta^{(k)}(u)|\mid u\in [c', c]\}$. Note that for $n>\frac{2\pi k}{(c-c')}$, 
\[\int^c_{c-\frac{2\pi k}{n}} \delta^{(k)}(u) du\leq \frac{2\pi k M}{n}. \]
Therefore, we have shown that 
\[\left(\frac{n}{2}\right)^k \int^c_{c-\frac{2\pi k}{n}} \delta^{(k)}(u) du=O_k(n^{k-1}).\] The same reasoning shows that 
\[\left(\frac{n}{2}\right)^k \int^{d+\frac{2\pi k}{n}}_{d} \delta^{(k)}(u) du=O_k(n^{k-1}).\]
Therefore, we have shown that
\[\# \Omega_{n}(I) =n^k\op{Vol} (B_k(I))+O_{k}\left(n^{k-1}\right).\]

\end{proof}

\begin{proposition}\label{prop 3.11}
    With respect to notation above, 

    \[\# \mathcal{S}'_I(n, k, m)=\# \mathcal{S}'(n, k, m) 2^k\op{Vol}(B_k(I))+O_k((n,m)n^{k-1}).\]
\end{proposition}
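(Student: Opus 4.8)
The plan is to combine the two preceding results—Corollary~\ref{size of S' lemma}, which relates $\#\mathcal{S}'(n,k,m)$ and $\#\mathcal{S}'_I(n,k,m)$ to the lattice point counts $\#\Omega_{n_1}$ and $\#\Omega_{n_1}(I)$, and Proposition~\ref{size of Omega_n I prop}, which replaces $\#\Omega_{n_1}(I)$ by its volume approximation—and then eliminate the common geometric factor by division.

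First I would apply Proposition~\ref{size of Omega_n I prop} at level $n_1$. Since $\Omega_{n_1}=B_k\cap L_{n_1}$ and $\Omega_{n_1}(I)=B_k(I)\cap L_{n_1}$, the same box-covering argument (taking $I=[-k,k]$ for the first, so that $B_k(I)=B_k$) yields
\[
\#\Omega_{n_1}=n_1^k\op{Vol}(B_k)+O_k(n_1^{k-1}),\qquad \#\Omega_{n_1}(I)=n_1^k\op{Vol}(B_k(I))+O_k(n_1^{k-1}).
\]
Substituting these into Corollary~\ref{size of S' lemma}, and writing $N:=\left(\frac{(n,m)+1}{2}\right)^k$, I obtain
\[
\#\mathcal{S}'(n,k,m)=\frac{N}{k!}\,n_1^k\op{Vol}(B_k)+O_k\!\big((n,m)n^{k-1}\big),
\]
and the analogous identity for $\#\mathcal{S}'_I(n,k,m)$ with $\op{Vol}(B_k)$ replaced by $\op{Vol}(B_k(I))$; here I have absorbed $N\cdot O_k(n_1^{k-1})/k!$ into the error term using $N n_1^{k-1}=O_k\big((n,m)^k n_1^{k-1}\big)=O_k\big((n,m)n^{k-1}\big)$ together with $n=(n,m)\,n_1$, so $(n,m)^{k-1}n_1^{k-1}=n^{k-1}$.

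Next I would take the ratio. Since $N\,n_1^k/k!=N\,n_1^k\cdot 2^{-k}/k!$ via $\op{Vol}(B_k)=2^{-k}$, the main term of $\#\mathcal{S}'(n,k,m)$ is exactly $\frac{N n_1^k}{2^k k!}$; multiplying this by $2^k\op{Vol}(B_k(I))$ reproduces the main term of $\#\mathcal{S}'_I(n,k,m)$. Thus
\[
\#\mathcal{S}'_I(n,k,m)=2^k\op{Vol}(B_k(I))\cdot\#\mathcal{S}'(n,k,m)+O_k\!\big((n,m)n^{k-1}\big),
\]
which is the claimed identity.

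The main obstacle is bookkeeping of the error terms rather than any conceptual difficulty: I must check that the error incurred when passing from $\#\mathcal{S}'(n,k,m)$ to its leading term $\frac{N n_1^k}{2^k k!}$, after multiplication by the bounded constant $2^k\op{Vol}(B_k(I))\le 1$, stays within $O_k((n,m)n^{k-1})$, and that the $O_k(n_1^{k-1})$ from Proposition~\ref{size of Omega_n I prop}—once scaled by $N/k!$—does not exceed this bound. Both reduce to the identity $n=(n,m)n_1$, so that $N n_1^{k-1}=O_k((n,m)n^{k-1})$ as noted above; care is only needed to confirm uniformity of all implied constants in $m$, which holds because every estimate used depends on $k$ alone.
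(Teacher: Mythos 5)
Your proposal is correct and follows essentially the same route as the paper: both combine Corollary~\ref{size of S' lemma} with Proposition~\ref{size of Omega_n I prop} (the paper implicitly converts $n_1^k$ back to $2^k\#\Omega_{n_1}$ and then to $\#\mathcal{S}'(n,k,m)$, while you expand both counts to their common main term $\frac{N n_1^k}{2^k k!}$ and match), with identical error bookkeeping via $n=(n,m)n_1$ and $2^k\op{Vol}(B_k(I))\leq 1$. No gaps.
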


\begin{proof} We have
    \[\begin{split}\# \mathcal{S}'_I(n, k, m)= &\left(\frac{(n,m)+1}{2}\right)^k\frac{\#\Omega_{n_1}(I)}{k!}+O_k((n,m)n^{k-1}),\\ 
    =& \left(\frac{(n,m)+1}{2}\right)^k \frac{n_1^k}{k!} \op{Vol}(B_k(I))+O_k((n,m)n^{k-1}) \\ 
    =& \left(\frac{(n,m)+1}{2}\right)^k \#\Omega_{n_1} \frac{2^k}{k!}\op{Vol}(B_k(I))+O_k((n,m)n^{k-1})\\
    =& \# \mathcal{S}'(n, k, m) 2^k\op{Vol}(B_k(I))+O_k((n,m)n^{k-1}). \end{split}\]
    For the above set of inequalities, we use Corollary \ref{size of S' lemma} and Proposition \ref{size of Omega_n I prop}.
\end{proof}

\begin{proposition}\label{prop 3.12}
    With respect to notation above, 

    \[\# \mathcal{S}'_I(n, k)=\# \mathcal{S}'(n, k) 2^k\op{Vol}(B_k(I))+O_k(n^{k+\epsilon}).\]
\end{proposition}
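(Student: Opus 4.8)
The plan is to deduce the statement for the full set $\mathcal{S}'(n,k)$ by summing the sliced estimate of Proposition \ref{prop 3.11} over all values of $m$. Recall that by definition $\mathcal{S}'(n,k)$ (resp.\ $\mathcal{S}'_I(n,k)$) is the disjoint union of the slices $\mathcal{S}'(n,k,m)$ (resp.\ $\mathcal{S}'_I(n,k,m)$) as $m$ ranges over $[0,n-1]_{\Z}$, and that the slice at $m=0$ is empty since $\mathcal{S}'(n,k,0)=\emptyset$. Hence both $\#\mathcal{S}'(n,k)$ and $\#\mathcal{S}'_I(n,k)$ equal the sum of their slices over $m\in[1,n-1]_{\Z}$, which is precisely the range in which Proposition \ref{prop 3.11} applies.

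First I would apply Proposition \ref{prop 3.11} to each slice with $m\in[1,n-1]_{\Z}$, giving
\[\#\mathcal{S}'_I(n,k,m)=\#\mathcal{S}'(n,k,m)\,2^k\op{Vol}(B_k(I))+O_k\big((n,m)n^{k-1}\big),\]
and then sum over $m$. Since the factor $2^k\op{Vol}(B_k(I))$ does not depend on $m$, the main terms assemble into $2^k\op{Vol}(B_k(I))\sum_{m=1}^{n-1}\#\mathcal{S}'(n,k,m)=\#\mathcal{S}'(n,k)\,2^k\op{Vol}(B_k(I))$, which is exactly the desired main term. It then remains to control the accumulated error $\sum_{m=1}^{n-1}O_k\big((n,m)n^{k-1}\big)=n^{k-1}\,O_k\big(\sum_{m=1}^{n-1}(n,m)\big)$.

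The crux is therefore the arithmetic estimate for $\sum_{m=1}^{n-1}(n,m)$. Here I would use that $\sum_{m=1}^{n}(n,m)=\sum_{d\mid n}d\,\varphi(n/d)$ is Pillai's function discussed in the remark above, and bound it crudely by $\sum_{d\mid n}d\,(n/d)=n\,d(n)$, where $d(n)$ is the number of divisors of $n$. Since $d(n)=O(n^\epsilon)$ for every $\epsilon>0$, this yields $\sum_{m=1}^{n-1}(n,m)\le \sum_{m=1}^{n}(n,m)=O(n^{1+\epsilon})$, and hence the total error is $n^{k-1}\cdot O(n^{1+\epsilon})=O_k(n^{k+\epsilon})$. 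Combining this with the main term completes the argument.

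The main obstacle is exactly this error accumulation. Each individual slice contributes an error as large as $O_k((n,m)n^{k-1})$, which can be of order $n^k$ when $m$ shares a large common factor with $n$; the point is that such $m$ are rare, and the divisor bound for Pillai's function encodes precisely the statement that $(n,m)$ is on average $O(n^\epsilon)$. This keeps the total error of order $n^{k+\epsilon}$, comfortably below the main term, whose order is $n^{k+1}$ (since $\#\mathcal{S}'(n,k)$ is comparable to $\#\mathcal{S}(n,k)=n\binom{(n-1)/2}{k}$).
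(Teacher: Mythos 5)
Your proposal is correct and follows essentially the same route as the paper: sum the slice-wise estimate of Proposition \ref{prop 3.11} over $m\in[1,n-1]_{\Z}$ (noting the $m=0$ slice is empty), and control the accumulated error via the identity $\sum_{m}(n,m)=\sum_{d\mid n}d\,\varphi(n/d)$ together with the divisor bound $d(n)=O(n^\epsilon)$, yielding $O_k(n^{k+\epsilon})$. The paper's proof is exactly this computation, so there is nothing to add.
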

\begin{proof}
 Note that $\mathcal{S}'(n, k, 0)$ is empty. By an argument similar to the one employed in Lemma \ref{bounding S'' lemma},
 \[\begin{split}\#\mathcal{S}_I'(n, k) &= \sum_{m=1}^{n-1} \#\mathcal{S}_I'(n, k, m),\\
    &= \sum_{m=1}^{n-1} \#\mathcal{S}'(n, k, m) 2^k\op{Vol}(B_k(I))+ O_k\left( n^{k-1}\sum_{m=1}^{n-1} (n,m) \right),\\
          &=
          \# \mathcal{S}'(n, k) 2^k\op{Vol}(B_k(I))+O_k\left(n^{k-1}\sum_{d|n} d \varphi\left(\frac{n}{d}\right)\right),\\
          &=
          \# \mathcal{S}'(n, k) 2^k\op{Vol}(B_k(I))+O_k(n^{k+\epsilon}),
     \end{split}
\]
where in the second line, we have applied Proposition \ref{prop 3.11}.
    \end{proof}

\begin{theorem}\label{main theorem 5.5}
For $k\geq 1$ fixed, and $I=[c,d]\subseteq [-k,k]$,
\[\op{Prob}_I(n,k)=\int_{c}^d \delta^{(k)}(u) du+O_k(n^{-1+\epsilon}).\]
\end{theorem}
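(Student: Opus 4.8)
The plan is to assemble the estimates established earlier in the section into the ratio defining $\op{Prob}_I(n,k)$, so the proof amounts to careful bookkeeping of error terms against the correct denominator. First I would record the decompositions $\mathcal{S}(n,k) = \mathcal{S}'(n,k) \sqcup \mathcal{S}''(n,k)$ and $\mathcal{S}_I(n,k) = \mathcal{S}_I'(n,k) \sqcup \mathcal{S}_I''(n,k)$, so that
\[
\op{Prob}_I(n,k) = \frac{\#\mathcal{S}_I(n,k)}{\#\mathcal{S}(n,k)} = \frac{\#\mathcal{S}_I'(n,k) + \#\mathcal{S}_I''(n,k)}{\#\mathcal{S}(n,k)}.
\]
Since $\#\mathcal{S}(n,k) = n\binom{(n-1)/2}{k}$ is of exact order $n^{k+1}$ (up to the constant $2^{-k}/k!$), this denominator is the reference scale against which every error term will be measured; I would record $\#\mathcal{S}(n,k) \asymp n^{k+1}$ explicitly at the outset.

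Next I would dispose of the contribution of the "bad" slices. Because $\mathcal{S}_I''(n,k) \subseteq \mathcal{S}''(n,k)$, Lemma \ref{bounding S'' lemma} gives $\#\mathcal{S}_I''(n,k) \leq \#\mathcal{S}''(n,k) = O_k(n^{k+\epsilon})$, whence $\#\mathcal{S}_I''(n,k)/\#\mathcal{S}(n,k) = O_k(n^{-1+\epsilon})$. The same lemma, applied to $\#\mathcal{S}'(n,k) = \#\mathcal{S}(n,k) - \#\mathcal{S}''(n,k)$, yields $\#\mathcal{S}'(n,k)/\#\mathcal{S}(n,k) = 1 + O_k(n^{-1+\epsilon})$.

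For the main term I would invoke Proposition \ref{prop 3.12}, which asserts $\#\mathcal{S}_I'(n,k) = \#\mathcal{S}'(n,k)\,2^k\op{Vol}(B_k(I)) + O_k(n^{k+\epsilon})$. Dividing by $\#\mathcal{S}(n,k)$ and feeding in the two observations above—together with the fact that $2^k\op{Vol}(B_k(I)) \leq 2^k\op{Vol}(B_k) = 1$ is bounded—converts the additive error $O_k(n^{k+\epsilon})$ into a relative error $O_k(n^{-1+\epsilon})$ and replaces the factor $\#\mathcal{S}'(n,k)/\#\mathcal{S}(n,k)$ by $1 + O_k(n^{-1+\epsilon})$. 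This leaves $\op{Prob}_I(n,k) = 2^k\op{Vol}(B_k(I)) + O_k(n^{-1+\epsilon})$. Finally, Lemma \ref{lem:vol-computation} gives $\op{Vol}(B_k(I)) = 2^{-k}\int_c^d \delta^{(k)}(u)\,du$, so $2^k\op{Vol}(B_k(I)) = \int_c^d \delta^{(k)}(u)\,du$, which is exactly the claimed formula.

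There is no genuine obstacle here, since all the combinatorial and analytic content has already been extracted in the preceding results; the statement is their synthesis. The only point demanding a little care is the propagation of errors through the division: one must verify that the additive $O_k(n^{k+\epsilon})$ term of Proposition \ref{prop 3.12} is small relative to $\#\mathcal{S}(n,k) \asymp n^{k+1}$, and that multiplying the bounded main term $2^k\op{Vol}(B_k(I))$ by $1 + O_k(n^{-1+\epsilon})$ does not worsen the exponent in the error. Both are immediate once the order of $\#\mathcal{S}(n,k)$ is in hand.
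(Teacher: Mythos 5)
Your proposal is correct and follows essentially the same route as the paper's proof: split $\mathcal{S}_I(n,k)$ into $\mathcal{S}_I'(n,k)\sqcup\mathcal{S}_I''(n,k)$, discard the second piece via Lemma \ref{bounding S'' lemma}, feed Proposition \ref{prop 3.12} into the ratio against $\#\mathcal{S}(n,k)=n\binom{(n-1)/2}{k}$, and finish with Lemma \ref{lem:vol-computation}. The only cosmetic difference is that the paper first writes $\op{Prob}_I(n,k)$ as the average $\frac{1}{n}\sum_{m}\op{Prob}_I(n,k,m)$ over slices before aggregating, whereas you work with the global ratio directly; the error bookkeeping is identical.
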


\begin{proof}
  Recall from \eqref{prob sum eqn} that 
    \[\op{Prob}_I(n, k)=\frac{1}{n}\sum_{m=0}^{n-1} \op{Prob}_I(n, k, m),\]
    where 
  \[\op{Prob}_I(n, k, m)=\frac{\#\mathcal{S}_I(n, k, m)}{\#\mathcal{S}(n, k, m)}=\frac{\#\mathcal{S}_I(n, k, m)}{\binom{\frac{n-1}{2}}{k}}.\]
  Note that $\# \mathcal{S}_I(n, k, m)=\# \mathcal{S}_I'(n, k, m)+\# \mathcal{S}_I''(n, k, m)$, and 
  therefore, 
  \[\begin{split}\op{Prob}_I(n, k)=& \frac{1}{n}\sum_{m=0}^{n-1} \op{Prob}_I(n, k, m) \\ 
  = & \frac{1}{n}\sum_{m=0}^{n-1} \frac{\mathcal{S}_I'(n, k, m)}{\binom{\frac{n-1}{2}}{k}}+\frac{\mathcal{S}_I''(n, k)}{n\binom{\frac{n-1}{2}}{k}}.\\ \end{split}\]
  
  By Lemma \ref{bounding S'' lemma}, we find that 
  \[\mathcal{S}_I''(n, k)\leq \mathcal{S}''(n,  k)=O_k(d(n)n^k)=O_k(n^{k+\epsilon}),\] and therefore, 
  \[\begin{split}\op{Prob}_I(n, k)=& \frac{1}{n} \frac{\mathcal{S}_I'(n, k)}{\binom{\frac{n-1}{2}}{k}}+O_k(n^{-1+\epsilon}).
  \end{split}\]
  By Proposition \ref{prop 3.12},
  \[\begin{split}\# \mathcal{S}'_I(n, k)=& \# \mathcal{S}'(n, k) 2^k\op{Vol}(B_k(I))+O_k(n^{k+\epsilon}),\\
  =& \# \mathcal{S}(n, k) 2^k\op{Vol}(B_k(I))+O_k(n^{k+\epsilon}),\\
  =&  n\binom{\frac{n-1}{2}}{k} 2^k\op{Vol}(B_k(I))+O_k(n^{k+\epsilon}),\\
  \end{split}\]
where we recall that for fixed $k$, $\op{Vol}(B_k(I))$ is bounded uniformly in $I$, which follows from Lemma \ref{lem:vol-computation}. Moreover,  by Lemma \ref{lem:vol-computation}, 
  \[\begin{split}\op{Prob}_I(n, k)=& 2^k\op{Vol}(B_k(I)) +O_k(n^{-1+\epsilon}),\\
  = & \int_{c}^d \delta^{(k)}(u) du+O_k(n^{-1+\epsilon}).
  \end{split}\]
  This completes the proof.
\end{proof}

\begin{proof}[Proof of Theorem \ref{main thm}]
    It follows from \eqref{probI} that
   \[\op{Prob}_I(n, k)= \frac{\#\{(X, \alpha)\mid X\in \cF_r, \alpha\in \op{Sp}(X), h(X)=n, \alpha\in J\} }{\#\{(X, \alpha)\mid X\in \cF_r, \alpha\in \op{Sp}(X), h(X)=n\}}.\]
By Theorem \ref{main theorem 5.5}, 
\[\op{Prob}_I(n,k)=\int_{c}^d \delta^{(k)}(u) du+O_k(n^{-1+\epsilon}),\] which proves the result.
\end{proof}

%\bibliographystyle{alpha}
%\bibliography{references}

%\end{document}

\end{document}